\theoremstyle{plain}
\newtheorem*{thm@P}{Theorem \pipp@}
\numberwithin{equation}{section}
\newtheorem{thm}{Theorem}[section]
\newtheorem*{thm*}{Theorem}
\let\old@newtheorem\newtheorem 
\renewcommand{\newtheorem}[2]{\old@newtheorem{#1}[thm]{#2} 
	\newrefformat{#1}{#2~\ref{##1}}} 
\newtheorem{prop}{Proposition}
\newtheorem{cor}{Corollary}
\newtheorem{lemma}{Lemma}
\newtheorem{fact}{Fact}
\theoremstyle{definition}
\newtheorem{defn}{Definition}
\newtheorem{example}{Example}
\theoremstyle{remark}
\newtheorem{rem}{Remark}
\newcommand{\Q}{\mathbb Q}
\newcommand{\R}{\mathbb R}
\newcommand{\N}{\mathbb N}
\newcommand{\K}{\mathbb K}
\newcommand{\no}{\mathbf {No}}
\newcommand{\Z}{\mathbb Z}
\newcommand{\n}{\mathfrak n}
\newcommand{\m}{\mathfrak m}
\newcommand{\x}{\mathbf{x}}
\newcommand{\rest}{\! \restriction \!}
\DeclareMathOperator{\supp}{supp}
\newcommand{\M}{\mathfrak{M}}
\newcommand{\fN}{\mathfrak{N}}
\newcommand{\T}{\mathbb{T}}
\newcommand\ntrunc{\mathrel{\blacktriangleleft}}
\newcommand\ntrunceq{\mathrel{\ooalign{{\raise-1ex\hbox{$\relbar$}}\cr\raise.1ex\hbox{$\ntrunc$}}}}
\def\M{\mathfrak{M}}
\def\J{\mathbb{J}}
\def\lm{\mathrm{lm}} 
\def\lc{\mathrm{lc}} 
\def\lt{\mathrm{lt}} 
\def\level{\mathrm{lv}} 
\def\Lhp{(\!(} 
\def\Rhp{)\!)} 
\def\k{\mathbf{k}}
\def\grid{\mathrm{g}}
\newcommand{\InfP}[1]{#1^{\uparrow}}
\newcommand{\ReP}[1]{#1^{\circ}}
\newcommand{\EpsP}[1]{#1^{\downarrow}}
\newenvironment{pf}{\begin{proof}}{\end{proof}}
\title{On the value group of the transseries}
\author{Alessandro Berarducci}
\author{Pietro Freni}
\date{December 11, 2020. ArXiv version January 3, 2021}
\thanks{The first author was partially supported by the Italian research project PRIN 2017, ``Mathematical logic: models, sets, computability'', Prot. 2017NWTM8RPRIN.}
\subjclass[2010]{Primary 16W60; Secondary 03C64.}
\address{Dipartimento di Matematica, Università di Pisa, Largo Bruno Pontecorvo 5, 56127 Pisa, Italy}
\address{School of Mathematics, University of Leeds, Leeds
	LS2 9JT, United Kingdom}
\begin{document}
\maketitle
\begin{abstract}
	We prove that the value group of the field of transseries is isomorphic to the additive reduct of the field. 
\end{abstract}
\tableofcontents

\section{Introduction}

Given a real closed field $(K, +, \cdot, <)$, the possibility of defining an ordered exponential on it, that is, a ordered group isomorphism $\exp : (K, +, <) \to (K^{>0}, \cdot <)$, is strictly connected to the properties of its natural valuation (i.e.\ the valutation whose valuation ring is given by the elements bounded in absolute value by some natural number). For example if an exponential exists, then the valuation group $v(K)$ is isomorphic to an additive complement of the valuation ring. In \cite{Kuhlmann1997} it is shown that for ordered fields that are maximal with respect to their natural valuation this condition fails unless the value group is trivial (in which case $K \subseteq \R$), so maximal non-archimedean ordered fields do not admit an exponential.
In \cite{Kuhlmann2005} the same property is used, the other way around, to show that, given a regular uncountable cardinal $\kappa$, there is a non-trivial group $\M$ such that the field $K=\R\Lhp\M\Rhp_\kappa \subseteq \R\Lhp\M\Rhp$ of $\kappa$-bounded generalized series admits an exponential map.

In \cite{Berarducci2018b} the authors study the related property of admitting an isomorphism $\Omega : (K, +,<) \to (\M, +, <)$, where $\M \subseteq K^{>0}$ is an embedded multiplicative copy of the value group. They call such an isomorphism  \emph{omega-map}, in analogy to Conway's omega map on Surreal numbers \cite{Conway76}, and prove that, for fields of the form  $\R\Lhp\M\Rhp_\kappa$, its existence implies the existence of an exponential map. Moreover, for fields of the same form, the converse holds under the additional hypothesis that the value group $\M$ is order isomorphic to its positive cone $\M^{> 1}$.

The above results leave open the question whether the field $\T$ of LE-transseries \cite{DriesMM2001} admits an omega-map, as it is not a field of the form $\R\Lhp \M\Rhp_\kappa$.
Besides $\R$, transseries are arguably the most important example of an exponential field. They are an important tool in asymptotic analysis and have been used by Ecalle \cite{Ecalle1992} to give a positive solution to Dulac's conjecture (the finiteness of limit cycles in planar polynomial vector fields).

The main result of the paper is that the value group of $\T$ is isomorphic to $\T$ itself as an ordered additive group. This is proved by explicitly constructing an omega-map $\Omega:\T \to \M^{LE}$ where $\M^{LE}$ is the group of transmonomials. 

Finally, abstracting some tools which have been useful in the construction we generalize results in \cite{Berarducci2018b} to a much wider class of fields. 
\medskip

We describe below the main ideas of the paper. 
In the case of exponential fields of the form $K=\R\Lhp \M \Rhp_\kappa$ treated in \cite{Berarducci2018b}, any order isomorphism $\eta : \M\to \M^{> 1}$ naturally induces an isomorphism $H: K \to \R\Lhp \M^{> 1}\Rhp$ of ordered $\R$-vector spaces from the field to the canonical complement of its valuation ring. This in turn induces an omega-map. 

When considering the case of $\T$, we follow a similar approach but there are many additional complications. We recall that $\T$ is a subfield of $\R \Lhp\M^{LE}\Rhp$. Any order isomorphism $\M^{LE} \to \M^{LE,> 1}$ induces an embedding of ordered $\R$-vector spaces $\T \to \R\Lhp \M^{LE,> 1}\Rhp$, however there is no guarantee that its image is $\T^\uparrow := \T\cap \R\Lhp \M^{LE,> 1}\Rhp$. In this paper we show that $\M^{LE}$ is order isomorphic to $\M^{LE,>1}$ and we produce a particular order isomorphism $\eta:\M^{LE}\to \M^{LE,>1}$ which induces an isomorphism of ordered $\R$-vector spaces $\T\to \T^\uparrow$. This in turn will induce an omega-map (Theorem \ref{thm:Main}).
%
%
%

In the above process we are lead to consider an ideal of subsets of $\M^{LE}$ (generated by subgroups) and characterize $\T$ as the field of series in $\R\Lhp\M^{LE}\Rhp$ with support in the ideal (Section ~\ref{sec:Summability}). A subset of $\M^{LE}$ will be called bounded if it belongs to the ideal and a map  $\eta: \M^{LE}\to \M^{LE,>1}$ will be called bounded if it maps bounded sets to bounded sets. The proof of the main result is then reduced to the problem of constructing a bounded order isomorphism $\eta:\M^{LE}\to \M^{LE,>1}$ with bounded inverse. This is achieved in Lemma \ref{Important1}.

\smallskip 

Endowing groups of monomials with suitable ideals of subsets  yields flexible constructions of many intereresting fields, encompassing $\T$, the $\kappa$-bounded series and the field of Puiseux series (Definition~\ref{defn:BField}).
%
%
  Some results of \cite{Berarducci2018b} generalize easily to this new setting (see Theorem~\ref{Omega&Exp}).

For simplicity of notation throughout the paper we work with fields whose residue field is $\R$, but with minor modifications we could have taken any other model of the first order theory of $(\R,\exp)$. 

\section{Hahn fields}

Given a multiplicatively written ordered abelian group $\M$ denote by $\R\Lhp \M\Rhp$ the field of Hahn's generalized series (cfr.\ \cite{Hahn1907}) with monomials from $\M$, real coefficients and reverse-well-ordered support
\[ \R\Lhp \M\Rhp = \{ f  \in \R^{\M} :  \supp (f)\; \text{is reverse-well-ordered}\}\]
where 
$$\supp(f)= \{ \mathfrak{m} \in \M : f(\mathfrak{m}) \neq 0\}$$
denotes the support of the series. The value of $f$ at $\mathfrak{m}$ is  referred to as the \emph{coefficient of the monomial $\mathfrak{m}$ in the series $f$} or the \emph{coefficient of $f$ at $\mathfrak{m}$} and written as $f_{\mathfrak{m}}:= f(\mathfrak{m})$.

The set $\R\Lhp \M \Rhp$ is naturally an ordered field extension of $\R$. Sums and multiplication by scalars in $\R$ are defined termwise and order is lexicographic, or equivalently $f >0$ if and only if $f_{\max \supp (f)}>0$. Multiplication has a Cauchy-like definition
\[ (fg)_\mathfrak{m}= \sum_{\mathfrak{n}\mathfrak{o} = \mathfrak{m}} f_{\mathfrak{n}}g_{\mathfrak{o}}\]
and the fact that the supports of $f$ and $g$ are reverse well ordered ensures that the sum on the right hand side has only finitely many non-zero terms and that the support $\supp(fg) \subseteq \supp(f) \supp(g)$ is still reverse-well-ordered.

Given a set $I$, an $I$-indexed family of series $(f_i \in \R\Lhp \M \Rhp : i \in I)$ is said to be \emph{summable} if the union of the supports of its elements $\bigcup_{i\in I} \supp(f_i)$ is reverse well ordered and for every $\mathfrak{m} \in \M$ the set $\{i : f_{i, \mathfrak{m}} \neq 0\}$ is finite: in such a case the formal sum is defined as
\[ \sum_{i \in I} f_i := g \quad \text{where} \quad g_{\mathfrak{m}}:= \sum_{\mathfrak{m} \in \supp(f_i)} f_{i, \mathfrak{m}}.\]
A monomial $\mathfrak{m} \in \M$ is usually identified with the series having coefficient $1$ at $\mathfrak{m}$ and $0$ at other monomials. A \emph{term} is then a series of the form $t=k \mathfrak{m}$ with $k \in \R$.
With such a convention, for every reverse well ordered set $S$ of monomials, a family of terms of the form $(k_{\mathfrak{m}} \mathfrak{m} : \mathfrak{m} \in S)$ is summable: elements of $\R\Lhp \M \Rhp$ are thus usually written as $\sum_{\mathfrak{m} \in S} f_\mathfrak{m} \mathfrak{m}$ for some reverse well ordered $S\subseteq \mathfrak{M}$ or as $\sum_{i<\alpha} k_i \mathfrak{m}_i$ where $\alpha$ is an ordinal number and $(\mathfrak{m}_i : i <\alpha)$ is a strictly decreasing $\alpha$-sequence in $\M$.\\

For every element $f$ of $\R\Lhp \M \Rhp\setminus \{0\}$ it makes sense to talk about the \emph{leading} monomial, coefficient and term of $f$, which we denote respectively as
\[\lm(f):=\max \supp (f), \quad \lc(f):=f_{\lm(f)}, \quad \lt(f):=\lm(f)\lc(f).\]
Since $\R\Lhp \M \Rhp$ is an ordered field it makes sense to define dominance and archimedean equivalence on non-zero elements. These notions have an easy characterization in terms of the leading term of a series.

\begin{defn}
For $x, y \in \R\Lhp \M \Rhp \setminus \{0\}$, we have the following relations
\begin{enumerate}
\item \emph{dominance:} $x \preceq y \;\Longleftrightarrow\; \exists n \in \N,\;  |x| < n|y| \;\Longleftrightarrow\; \lm(x) \le \lm(y)$;
\item \emph{comparability:} $x \asymp y \;\Longleftrightarrow\; \big( x \preceq y \; \& \; y \preceq x\big) \;\Longleftrightarrow\; \lm(x) =\lm(y)$;
\item \emph{strict dominance:} $x \prec y \;\Longleftrightarrow\; \big( x \preceq y \; \& \; x \not\asymp y \big) \;\Longleftrightarrow\; \lm(x) < \lm(y)$;
\item \emph{asymptotic equivalence:} $x \sim y \;\Longleftrightarrow\; x-y \prec x \;\Longleftrightarrow\; \lt(x)=\lt(y)$.
\end{enumerate}
Series that are $\succ 1$ are said to be \emph{infinite} whereas elements $\prec 1$ will be said to be \emph{infinitesimal}.
\end{defn}

\begin{rem}
The function $\lm : \mathbb{R}\Lhp \M \Rhp\setminus \{0\} \rightarrow \M$, where we endow $\M$ with the opposite order, is a field valuation whose residue field is $\R$ and whose valuation ring is the set of series $f$ such that $\supp(f) \le 1$. It is called \emph{archimedean valuation}. 
\end{rem}

\begin{rem}
We recall that an extension of valued fields is immediate if it preserves the value group and residue field. It is worth mentioning that $\R\Lhp \M\Rhp$ is maximal in the sense that it has no proper immediate extension (see \cite[p.193, Satz~26]{Krull1932}).  Thus every proper ordered field extension $F\supseteq \R\Lhp \M\Rhp$ has non-zero elements that are not comparable to any element of $\R\Lhp\M\Rhp$. 
\end{rem}

\begin{rem}
Every element of  $f = \sum_{i < \alpha} k_i\mathfrak{m}_i$ of $\R\Lhp \mathfrak{M} \Rhp$ decomposes uniquely as
\[ f= \InfP{f} + \ReP{f} + \EpsP{f}\]
where $\supp (\InfP{f}) > 1$, $ \ReP{f} \in \R$ and $\EpsP{f} \prec 1$, whereas every non-zero element $f$ decomposes multiplicatively as
\[ f= \lm(f) \lc(f) \left( 1 + \frac{f-\lt(f)}{\lt(f)}\right)\]
where $\lm(f) \in \M$, $\lc(f)\in \R\setminus \{0\}$ and $\frac{f-\lt(f)}{\lt(f)} \prec 1$.
\end{rem}

\begin{rem}
It may worth remarking that in the definitions above one could have considered any ordered field $\k$ instead of $\R$: in this case instead of $\preceq$ one must consider the $\k$-dominance relation defined as $x \preceq_\k y \Leftrightarrow \exists k \in \k, \; |x| < k |y|$.
\end{rem}

If $(\M, \cdot, 1, \prec)$ is an orderd group we will denote by $\M^{\succ 1}$ the set of elements of $\mathfrak{M}$ greater than $1$.

\begin{fact}\label{Summability1}
If $\mathfrak{N}$ is a multiplicatively written ordered abelian group, $\k$ is a subfield of $\mathbb{R} \Lhp \mathfrak{N} \Rhp$ and $\M\subseteq \mathfrak{N}$ is a subgroup such that $\M^{\succ 1}> \k$, then for every ordinal $\alpha$, every strictly decreasing sequence $(\mathfrak{m}_i)_{i <\alpha}$ of elements of $\M$, and every sequence $(k_i)_{i <\alpha}$ of elements of $\k$, the family $k_i \mathfrak{m}_i$ is summable.
\begin{proof}
Just note that $\supp(k_i \mathfrak{m}_i) < \supp(k_j \mathfrak{m}_j)$ for $i > j$.
\end{proof}
\end{fact}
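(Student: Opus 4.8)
The plan is to verify directly the two defining conditions for $(k_i\m_i)_{i<\alpha}$ to be summable: that $U:=\bigcup_{i<\alpha}\supp(k_i\m_i)$ is reverse-well-ordered, and that every $\mathfrak n\in\M$ lies in only finitely many of the supports $\supp(k_i\m_i)=\supp(k_i)\,\m_i$ (here I discard once and for all the indices with $k_i=0$). Both conditions are immediate consequences of the assertion that the ``blocks'' get pushed strictly below one another, $\supp(k_i\m_i)<\supp(k_j\m_j)$ whenever $i>j$: this exhibits $U$ as a union, indexed by $\alpha$ in decreasing order, of reverse-well-ordered blocks each lying below the previous one, which is again reverse-well-ordered; and it makes the blocks pairwise disjoint, so a fixed monomial lies in at most one of them.

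So the heart of the matter is that inequality. First I would unwind the hypothesis: since $\k$ is a field, hence closed under negation, $\M^{\succ1}>\k$ means $|k|<\mathfrak g$ for every $k\in\k$ and every $\mathfrak g\in\M^{\succ1}$. From this, any nonzero $k\in\k$ satisfies $\lm(k)<\mathfrak g$ for all $\mathfrak g\in\M^{\succ1}$: otherwise $\lm(k)\ge\mathfrak g$ for some such $\mathfrak g$, so $\lm(k^2)=\lm(k)^2\ge\mathfrak g^2>\mathfrak g$, whence $|k^2|>\mathfrak g$ (a positive series whose leading monomial exceeds the positive monomial $\mathfrak g$ is itself $>\mathfrak g$), contradicting $k^2\in\k$. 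Consequently $\supp(k)\le\lm(k)<\M^{\succ1}$ for every nonzero $k\in\k$. Now fix $i>j$; then $\m_j\m_i^{-1}\in\M^{\succ1}$, and for $\mathfrak a\in\supp(k_i)$, $\mathfrak b\in\supp(k_j)$ the desired $\mathfrak a\m_i<\mathfrak b\m_j$ is equivalent to $\mathfrak a\mathfrak b^{-1}<\m_j\m_i^{-1}$; so it would suffice to know that a product of a monomial occurring in an element of $\k$ by the inverse of such a monomial is still $<\M^{\succ1}$.

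Verifying that last point is, I expect, the main obstacle. It is clean when $\bigcup_{k\in\k}\supp(k)$ lies in a single convex subgroup $C$ of $\fN$ with $C\cap\M=\{1\}$ — then $C<\M^{\succ1}$ and $\mathfrak a\mathfrak b^{-1}\in C<\M^{\succ1}$ — which covers the cases that matter (the residue field $\k=\R$, where the blocks are singletons, and Hahn subfields more generally). In full generality it fails literally: $\supp(k)$ need not be bounded below by $\M^{\prec1}$, so after multiplying by $\m_i$ a high monomial of a later block can sit above a low monomial of an earlier one, and the blocks interleave rather than being totally ordered. One must then retreat to the weaker but valid statement that $i\mapsto\lm(k_i\m_i)=\lm(k_i)\,\m_i$ is strictly decreasing — which the computation above does yield, since for $i>j$ one has $\lm(k_i\m_i)/\lm(k_j\m_j)=\lm(k_i/k_j)\cdot\m_i\m_j^{-1}<(\m_j\m_i^{-1})\cdot(\m_i\m_j^{-1})=1$ using $k_i/k_j\in\k$ — and recover reverse-well-orderedness of $U$ and the finiteness condition from this together with $\supp(k_i)\le\lm(k_i)$ and additional control on how the supports of elements of $\k$ thin out downward as $i$ grows: a strictly increasing chain in $U$, or a monomial lying in infinitely many blocks, should, after passing to a monotone subsequence of indices, be forced into a single reverse-well-ordered block. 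Pinning down this last confinement argument precisely — not merely the leading-monomial comparison — is the delicate step.
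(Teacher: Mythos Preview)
Your reduction to the block-ordering claim $\supp(k_i\m_i)<\supp(k_j\m_j)$ for $i>j$ is exactly the paper's proof --- that single sentence is all the paper gives. You have gone further and correctly diagnosed that this claim, as stated, does \emph{not} hold for an arbitrary subfield $\k$: your observation that one needs $\mathfrak a\mathfrak b^{-1}<\M^{\succ1}$ for $\mathfrak a,\mathfrak b$ in supports of elements of $\k$, and that this can fail, is right. A concrete instance: in $\mathfrak N=x^{\Q}$ with $\M=x^{\Z}$, the field $\k=\Q(\pi+x^{-1})$ satisfies $\M^{\succ1}>\k$, yet with $k_0=\pi+x^{-1}$, $k_1=1$, $\m_0=x$, $\m_1=1$ one gets $x^{-99}\in\supp(k_0^{100}\m_0)$ lying below $1\in\supp(k_1\m_1)$, so the blocks interleave.

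The reason the paper's argument is nevertheless adequate for its purposes is the one you isolate: in every application the paper makes of this Fact, $\k$ is a Hahn field $\R\Lhp C\Rhp$ for a subgroup $C<\M^{\succ1}$, so all supports lie in $C$ and $\mathfrak a\mathfrak b^{-1}\in C<\M^{\succ1}$ trivially. Your ``convex subgroup'' remark captures exactly this.

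As for your proposed repair in full generality: the leading-monomial comparison $\lm(k_i\m_i)<\lm(k_j\m_j)$ is correct and gives pairwise disjointness of leading terms, but the ``confinement argument'' you gesture at cannot be completed without some additional hypothesis on $\k$ (e.g.\ that $\bigcup_{k\in\k}\supp(k)$ lies in a subgroup $<\M^{\succ1}$). An infinite increasing sequence in the union of supports need only have its members bounded above by the corresponding leading monomials, and the latter being strictly decreasing does not by itself bound the number of blocks involved. So the Fact, read literally for arbitrary subfields, appears to need a mild extra assumption that the paper's applications all satisfy but its statement omits.
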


\begin{defn}\label{GenCoeffSums}
In the hypothesis of Fact~\ref{Summability1} we denote the set of elements of the form $\sum_{i <\alpha} k_i \mathfrak{m}_i$ with $k_i \in \k$ and $\mathfrak{m}_i \in \M$ as $\k\Lhp \M\Rhp$: this is a subfield of $\R\Lhp \mathfrak{N} \Rhp$ isomorphic to the field of generalized series with coefficients from $\k$ and monomials from $\M$.

More generally if $\Gamma$ is a subset of $\mathfrak{N}$, $\k$ is a subfield of $\R\Lhp \mathfrak{N}\Rhp$, and $\k < \langle \Gamma \rangle^{\succ 1}$, where $\langle \Gamma \rangle$ is the subgroup generated by $\Gamma$, then $\k\Lhp \Gamma\Rhp$ will denote the $\k$-vector subspace of $\R\Lhp \mathfrak{N} \Rhp$ consisting of the series of the form $\sum_{i <\alpha} k_i \mathfrak{m}_i$ with all $\mathfrak{m}_i$ laying in $\Gamma$ and $k_i$ laying in $\k$.

For example if $\k=\R$ then every $\Gamma \subseteq \mathfrak{N}$ satisfies the hypothesis, so given any $\Gamma \subseteq \mathfrak{N}$, $\R\Lhp\Gamma\Rhp$ is the set of series with support included in $\Gamma$.
\end{defn}

\begin{rem}[{\cite[Cor. 2.19]{Berarducci2018b}  or \cite[\S 1.4]{DriesMM2001}}]\label{IteratedHahn}
Let $\k \subseteq \R\Lhp \mathfrak{N}\Rhp$ be a subfield and let $\M_1, \M_2 \subseteq\mathfrak{N}$ be subgroups such that $\M_2^{\succ 1} > \M_1^{\succ 1} > \k$. Then 
$$\k\Lhp \M_1 \Rhp \Lhp \M_2 \Rhp = \k \Lhp \M_1 \M_2 \Rhp.$$
 In particular every series $f \in \k \Lhp \M_1 \M_2 \Rhp$ has a unique representation as
\[f= \sum_{i < \alpha} k_i \mathfrak{m}_i \qquad \text{with} \qquad k_i \in \k\Lhp \M_1 \Rhp \quad \text{and} \quad \mathfrak{m}_i \in \M_2 \quad \text{for every}\; i<\alpha.\]
\end{rem}

\section{Transseries}
The field $\T$ of transseries is a subfield of a field of the form $\R((\M^{LE}))$ where $\M^{LE}$ is a suitable ordered multiplicative group called the group of transsmonomials. We shall not define $\T$, but in this and the following section we list all the properties needed in this paper. In particular we shall need the fact that $\T$ is the union of the subfields $\T_{n,\lambda}$ in Definition \ref{defn:tabular}.  This representation of $\T$ will be used to introduce the ideal of subsets of $\M^{LE}$ mentioned in the introduction. 

\begin{defn}
	Denote by $\T$ the field of LE-transseries in a formal variable $\x$ as described in \cite{DriesMM2001} and let $\M^{LE}$ be the group of LE-transserial monomials. Note that $$\T\subseteq \R\Lhp\M^{LE}\Rhp.$$ Let $$\T^\uparrow = \mathbb{R} \Lhp \M^{LE, \succ 1} \Rhp \cap \T$$ be the $\R$-vector space of the transseries whose support only contains infinite monomials and observe that $\T= \T^\uparrow \oplus \R\oplus o(1)$ where $o(1)$ is the set of infinitesimal transseries. The elements of $\T^\uparrow$ are called {\em purely infinite}. 
\end{defn}
\begin{fact}
Recall that $\T$ admits an exponential function $\exp: \T \rightarrow \T^{>0}$ making it into an elementary extension of the ordered field of real numbers with the natural exponential function. The function $\exp$ restricts to an ordered group isomorphisms
\[\exp|: \big(\T^{\uparrow}, +, 0, <\big)\simeq \big(\M^{LE},\cdot, 1, <\big)\]
and this suffices to determine $\exp$ on the whole $\T$ via the formula
\begin{equation}\label{exp}
\exp (f)= \exp(\InfP{f}) \exp (\ReP{f}) \sum_{n \in \mathbb{N}} \frac{(\EpsP{f})^{n}}{n!}
\end{equation}

The compositional inverse of $\exp$ is called logarithm, $\log: \T^{>0} \rightarrow \T$, and has an analogous piecewise characterization in terms of the multiplicative decomposition: for $f >0$ one has
\begin{equation}\label{log}
\log(f) = \log(\lm (f) ) + \log( \lc(f) ) +  \sum_{n >0} \frac{(-1)^{n+1}\varepsilon}{n} \qquad \varepsilon = \frac{f-\lt(f)}{\lt(f)}
\end{equation}
For $g>0$ we define $f^g:=\exp(g \log(f) )$.
\end{fact}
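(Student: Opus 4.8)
The plan is to treat this statement as a recollection of the inductive construction of $\T$ carried out in \cite{DriesMM2001}, and to indicate where each clause is verified there. First I would recall that $\T$ is realized as a union $\T=\bigcup_n \T_n$ of Hahn subfields $\T_n\subseteq\R\Lhp\M^{LE}\Rhp$, where $\T_0$ is a Laurent-type field of series in $\x^{-1}$ together with all its iterated logarithms $\log\x,\log\log\x,\dots$, and $\T_{n+1}$ is obtained from $\T_n$ by freely adjoining exponentials $\exp(f)$ of the purely infinite elements $f\in\T_n^{\uparrow}$, the monomial group being correspondingly enlarged to $\M_n\cdot\exp(\T_n^{\uparrow})$ subject only to the relations $\exp(f)\exp(g)=\exp(f+g)$; then $\M^{LE}=\bigcup_n\M_n$. (The double-indexed refinement of this chain is the family $\T_{n,\lambda}$ of Definition~\ref{defn:tabular}.)

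Granting this construction, the group isomorphism $\exp|:(\T^{\uparrow},+,0,<)\to(\M^{LE},\cdot,1,<)$ is almost formal. Surjectivity holds because every monomial of $\M^{LE}$ is by construction a finite product of a monomial of $\M_0$ and exponentials of purely infinite series, hence is itself $\exp(f)$ for a single purely infinite $f$ --- here one uses that $\T_0^{\uparrow}$ already contains the iterated logarithms that exhibit the $\M_0$-part as an exponential. The homomorphism property is precisely the defining relation $\exp(f+g)=\exp(f)\exp(g)$; injectivity follows from strict monotonicity; and strict monotonicity reduces, via the homomorphism property, to the assertion that $\exp(h)\succ1$ for positive purely infinite $h$, which is again built into the placement of the new monomials among the infinite ones.

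Next I would extend $\exp$ to all of $\T$ by formula \eqref{exp}: given $f=\InfP{f}+\ReP{f}+\EpsP{f}$, the factor $\exp(\InfP{f})$ is the monomial just produced, $\exp(\ReP{f})$ is the usual real exponential, and $\sum_{n\in\N}(\EpsP{f})^{n}/n!$ lies in $\R\Lhp\M^{LE}\Rhp$ because the powers of the infinitesimal $\EpsP{f}$ form a summable family (consecutive powers are strictly $\prec$ one another). One checks that this yields a homomorphism $(\T,+)\to(\T^{>0},\cdot)$ --- the cross-terms behave as for the classical exponential since the three summands occupy the disjoint infinite, real, and infinitesimal zones --- that it extends the three partial exponentials, and that it is onto, using the multiplicative decomposition $f=\lm(f)\lc(f)(1+\eps)$ with $\eps\prec1$. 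Bijectivity then gives the logarithm; reading it off the same decomposition produces \eqref{log} with $\log(1+\eps)=\sum_{n>0}(-1)^{n+1}\eps^{n}/n$ (again a summable family), and $f^{g}:=\exp(g\log f)$ is well defined for $f>0$.

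The only deep point, which I would import from \cite{DriesMM2001} rather than reprove, is that $(\T,+,\cdot,<,\exp)$ is an \emph{elementary} extension of the real exponential field; this rests on Wilkie's model-completeness of $(\R,\exp)$ together with the valuation-theoretic analysis of $\T$ in \cite{DriesMM2001}, and it is by far the hardest ingredient --- the exp/log manipulations above are routine once $\M^{LE}$ is in hand. It is worth noting that only the group isomorphism $\exp|:\T^{\uparrow}\simeq\M^{LE}$ and the formulas \eqref{exp}--\eqref{log} are actually used in the rest of this paper, so the elementary-extension clause may be treated as a black box.
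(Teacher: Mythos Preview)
The paper gives no proof of this Fact: it is presented as a recollection of properties of $\T$ imported wholesale from \cite{DriesMM2001}, in keeping with the announcement at the start of Section~3 that ``we shall not define $\T$, but \dots\ list all the properties needed in this paper.'' Your proposal therefore does considerably more than the paper itself, which simply quotes the result.

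Your sketch is sound in outline. A couple of small points of accuracy: your description of the base field $\T_0$ as already containing all iterated logarithms of $\x$ does not match the indexing of \cite{DriesMM2001} or of Definition~\ref{defn:tabular} here, where $\T_{0,\lambda}=\R\Lhp\lambda^\R\Rhp$ and the iterated logarithms enter by varying the second index $\lambda$ (the ``L'' step) rather than being present at stage~$0$; the surjectivity of $\exp|$ onto $\M^{LE}$ is then obtained by combining both indices, exactly as in $\T=\bigcup_{n,\lambda}\T_{n,\lambda}$. Also, the claim that $(\T,\exp)$ is an elementary extension of $(\R,\exp)$ indeed rests on Wilkie's theorem together with the verification in \cite{DriesMM2001} that $\T$ models $T_{\mathrm{an},\exp}$; you are right to flag this as the only nontrivial ingredient and to treat it as a black box, since the present paper never uses it.
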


\begin{defn}[Normal form]\label{defn:normal}
	Since $\M^{LE}=\exp(\T^\uparrow)$, every element $f \in \T$ has a unique representation as $$f= \sum_{i <\alpha} r_i e^{\gamma_i}$$ where $\alpha$ is an ordinal, $r_i \in \R\setminus \{0\}$ for every $i<\alpha$, $(\gamma_i)_{i< \alpha}$ is a strictly decreasing sequence of elements of $\T^{\uparrow}$ and $e^{\gamma_i}=\exp(\gamma_i)$; we call $\sum_{i <\alpha} r_i e^{\gamma_i}$ the \emph{normal form of $f$}.
\end{defn}

\section{Stratification}
Below we work in the field $\T$ of LE-transseries in the formal variable $\x$.

\begin{defn} For $n\in \N$, let $\log_n$ be the $n$-fold composition of $\log$ and let $\exp_n$ be the $n$-fold composition of $\exp$. We extend this notation to the case $n\in \Z$ with the convention that $\exp_{n}=\log_{-n}$. For example $\log_1(\x) = \log(\x)$, $\log_0(\x) = \x$ and $\log_{-1}(\x) = \exp(\x)$. 
	Now let $\exp_\Z(\x) = \{\exp_n(\x)\mid n\in \Z\}$. For $\lambda \in \exp_\Z(\x)$ and $n\in \Z$, we define $\lambda_{n}= \exp_n(\lambda)$ so that $$\lambda_{-n} = \log_n(\lambda).$$  
\end{defn}

\begin{defn}\label{defn:tabular}
For $\lambda \in \exp_\mathbb{Z}(\x)$ and $n\in \N$ let us consider the following inductively defined subsets of $\T$:
\medskip

\begin{tabular*}{10cm}{@{\extracolsep{\fill}}clll}
	(1) & $\M_{0,\lambda}:=\lambda^\R$, & $\T_{0,\lambda} = \R\Lhp \M_{0, \lambda} \Rhp$, & $\J_{0 , \lambda}=\R\Lhp \M_{0,\lambda}^{\succ 1} \Rhp$. \tabularnewline
	(2) & $\M_{n+1,\lambda}:=e^{\J_{n,\lambda}}$, & $\T_{n+1,\lambda}=\T_{n,\lambda}\Lhp\M_{n+1,\lambda}\Rhp$, & $\J_{n+1,\lambda}:=\T_{n,\lambda}\Lhp\M_{n+1,\lambda}^{\succ1}\Rhp$.\tabularnewline
\end{tabular*}
\end{defn}
For this to be well defined one needs to observe that for each $n \in \N$  we have $\T_{n, \lambda} < \mathfrak{M}_{n+1, \lambda}^{\succ 1}$ (see Definition \ref{GenCoeffSums} and Remark \ref{IteratedHahn}) and that $\T_{n, \lambda}\Lhp \M_{n+1, \lambda}\Rhp \subseteq \T$. For the verification of these facts the reader must refer to the original definition of the LE-transseries in \cite{DriesMM2001} or to the equivalent definition in \cite[Prop. 4.12]{Berarducci2019} (see in particular \cite[Lemma 4.14]{Berarducci2019}). From \cite{DriesMM2001}  or \cite[Prop. 4.18]{Berarducci2019} it also follows that 
$$\T=\bigcup_{n,\lambda} \T_{n,\lambda}$$
where $n\in \N$ and $\lambda \in \exp_\Z(\x)$. In fact in the union it suffices to take $\lambda$ of the form $\x_{-k}$ with $k\in \N$ (rather than $k\in \Z$). This depends on the fact that  $\T_{n,\exp(\lambda)} \subseteq \T_{n+1,\lambda}$. 
%
Notice that in \cite{DriesMM2001} $\T$ and $\exp$ are defined by a simultanous induction, while in \cite{Berarducci2019} the transseries are defined as a subfield of Conway's surreal numbers $\no$ with the exponentiation coming from $\no$. For a short account of the latter approach and all the relevant definitions see also \cite{Berarducci2020}. 
\begin{defn}
Although $\T$ is not a maximal valued field, its subfields $\T_{n,\lambda}$ are maximal, indeed 
$$\T_{n,\lambda} = \R\Lhp \fN_{n,\lambda}\Rhp$$ where $\fN_{n,\lambda} = \T_{n,\lambda} \cap \M^{LE}.$ The subgroups $\fN_{n,\lambda}\subseteq \M^{LE}$ can be inductively generated as follows: 
	\begin{enumerate}
		\item $\fN_{0,\lambda} = \M_{0,\lambda}$. 
		\item $\fN_{n+1,\lambda} = \fN_{n,\lambda}\M_{n+1,\lambda}$.
	\end{enumerate}	
\end{defn}	
\begin{rem}
For $n\in \N$ we have $\M_{n+1,\lambda}^{\succ 1} > \fN_{n,\lambda}$ and a direct lexicographic product $$\fN_{n+1,\lambda} = \M_{0,\lambda} \M_{1,\lambda} \cdot \ldots \cdot \M_{n+1,\lambda},$$ so  $\R\Lhp\fN_{n,\lambda} \Rhp \Lhp \M_{n+1,\lambda}\Rhp = \R\Lhp\fN_{n+1,\lambda}\Rhp$. 
\end{rem}

\begin{defn} We define $\T_{n,\lambda}^\uparrow = \R\Lhp\fN_{n,\lambda}^{\succ 1}\Rhp$ and observe that $\T^\uparrow = \bigcup_{n,\lambda} \T_{n,\lambda}^\uparrow$. 
\end{defn}

\begin{rem}[Various normal forms]\label{rem:forms}
Since $\M_{n+1,\lambda}^{\succ 1} > \fN_{n,\lambda}$ we have $\fN_{n+1,\lambda}^{\succ 1}=\big(\fN_{n,\lambda}\M_{n+1,\lambda})^{\succ 1}= \fN_{n, \lambda}^{\succ 1} \cup (\fN_{n, \lambda} \M_{n+1,\lambda}^{\succ 1})$ and $\fN_{n, \lambda}^{\succ 1}<\fN_{n, \lambda} \M_{n+1,\lambda}^{\succ 1}$, so applying $\R \Lhp - \Rhp$ we get $\T_{n+1, \lambda}^{\uparrow} = \T_{n, \lambda}^{\uparrow} + \J_{n+1, \lambda}$ and by induction we easily obtain
	\begin{equation} \label{rem:forms:eqn1}
	\T_{n,\lambda}^\uparrow = \J_{0,\lambda} + \ldots + \J_{n, \lambda}.
	\end{equation}
It follows that $$\fN_{n+1,\lambda} = \M_{0,\lambda} \M_{1,\lambda} \cdot \ldots \cdot \M_{n+1,\lambda}= \lambda^\R\exp(\T_{n,\lambda}^\uparrow),$$ thus $\T_{n+1,\lambda} = \R\Lhp\lambda^\R \exp(\T_{n,\lambda}^\uparrow)\Rhp$. In other words every $f\in \T_{n+1,\lambda}$ can be written as 
$$f = \sum_{i<\alpha} r_i\lambda^{s_i} e^{\alpha_i}$$
 where  $\alpha$ is an ordinal, $r_i\in \R \setminus \{0\}$, $s_i\in \R, \alpha_i \in \T_{n,\lambda}^\uparrow$ and $(\lambda^{s_i} e^{\alpha_i})_{i<\alpha}$ is strictly decreasing. We can also write it in the form $$f = \sum_{i<\alpha} r_i e^{\beta_i}$$ where $\beta_i = s_i \log(\lambda) + \alpha_i$ (this is the normal form of Definition \ref{defn:normal}). 

Recalling that $\T_{n+1,\lambda} = \T_{n,\lambda}\Lhp \M_{n+1,\lambda}\Rhp = \T_{n,\lambda}\Lhp e^{\J_{n,\lambda}}\Rhp$ we also have a representation of the form 
$$f = \sum_{i<\alpha} k_i e^{\gamma_i}$$
where $k_i \in \T_{n,\lambda}$ and $\gamma_i \in \J_{n,\lambda}$. 
\end{rem}		

\begin{defn} It is convenient to extend Definition \ref{defn:tabular} to the case when $n\in \Z$, so we put
\begin{enumerate}
	\item $\M_{-1,\lambda} = 1$.
	\item $\T_{-1,\lambda} = \R$. 
	\item $\J_{-1,\lambda} = \R \log(\lambda)$. 
\end{enumerate}
and for $n<-1$ we put $\M_{n,\lambda} = 1, \T_{n,\lambda} = \R$ and $\J_{n,\lambda} = \{0\}$. 
\end{defn}
\begin{prop}\label{prop:increasing}
For all $n\in \Z$ and $\lambda\in \exp_\Z(\x)$, 
\begin{enumerate}
	\item $\M_{n+1,\lambda_{-1}} \supseteq \M_{n,\lambda}$. 
	\item $\T_{n+1,\lambda_{-1}} \supseteq \T_{n,\lambda}$.  
	\item $\J_{n+1,\lambda_{-1}} \supseteq \J_{n,\lambda}$. 
\end{enumerate}
\end{prop}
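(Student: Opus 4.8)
The plan is to prove the three inclusions simultaneously by induction on $n \in \Z$, exploiting the inductive definitions of $\M_{n,\lambda}$, $\T_{n,\lambda}$ and $\J_{n,\lambda}$ together with the compatibility relation $\lambda_{-1} = \log(\lambda)$, so that $(\lambda_{-1})_{n+1} = \exp_{n+1}(\log(\lambda)) = \exp_n(\lambda) = \lambda_n$. In other words, the tower built from the base point $\lambda_{-1}$ is, after one extra step, indexed by the same iterated exponentials/logarithms of $\lambda$ that appear in the tower built from $\lambda$, which is exactly what makes the shift-by-one inclusions plausible.

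First I would dispose of the base and the negative range. For $n < -1$ all three objects on the left are the trivial/degenerate ones ($\M = 1$, $\T = \R$, $\J = \{0\}$) while the right-hand sides are at least as large, so (1)--(3) hold trivially; the only genuinely new case at the bottom is $n = -1$, where I must check $\M_{0,\lambda_{-1}} \supseteq \M_{-1,\lambda} = 1$ (immediate, since $\M_{0,\lambda_{-1}} = \lambda_{-1}^{\R} \ni 1$), $\T_{0,\lambda_{-1}} = \R\Lhp\lambda_{-1}^{\R}\Rhp \supseteq \R = \T_{-1,\lambda}$ (immediate), and $\J_{0,\lambda_{-1}} = \R\Lhp (\lambda_{-1}^{\R})^{\succ 1}\Rhp \supseteq \R\log(\lambda) = \J_{-1,\lambda}$. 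The last one is the first place the identification $\lambda_{-1} = \log(\lambda)$ is used: $\R\log(\lambda) = \R\log(\lambda_0)$ sits inside $\R\Lhp(\lambda_{-1}^{\R})^{\succ 1}\Rhp$ because $\log(\lambda)$ is (up to sign, and discarding the real part) a single monomial of the form $\lambda_{-1}^{1}$ — more precisely $\lambda_{-1} = \log\lambda$ is itself the generator $\mathfrak{m}$ of $\M_{0,\lambda_{-1}}$, so $r\log\lambda \in \R\Lhp \M_{0,\lambda_{-1}}^{\succ 1}\Rhp = \J_{0,\lambda_{-1}}$ for every $r\in\R$.

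For the inductive step, assume (1)--(3) hold at level $n$; I want them at level $n+1$. Inclusion (1): $\M_{n+2,\lambda_{-1}} = e^{\J_{n+1,\lambda_{-1}}} \supseteq e^{\J_{n,\lambda}} = \M_{n+1,\lambda}$ follows at once from the inductive hypothesis (3) by applying the order-isomorphism $\exp$. Inclusion (3): $\J_{n+2,\lambda_{-1}} = \T_{n+1,\lambda_{-1}}\Lhp \M_{n+2,\lambda_{-1}}^{\succ 1}\Rhp$ and $\J_{n+1,\lambda} = \T_{n,\lambda}\Lhp \M_{n+1,\lambda}^{\succ 1}\Rhp$; by inductive hypotheses (2) and (1) we have $\T_{n,\lambda} \subseteq \T_{n+1,\lambda_{-1}}$ as subfields and $\M_{n+1,\lambda}^{\succ 1} \subseteq \M_{n+2,\lambda_{-1}}^{\succ 1}$, and since taking $\R$-spans (or $\k\Lhp-\Rhp$ in the sense of Definition \ref{GenCoeffSums}) is monotone in both the coefficient field and the set of monomials, the inclusion of the corresponding series fields follows — here one should note that the hypothesis $\k < \langle\Gamma\rangle^{\succ 1}$ needed to form $\k\Lhp\Gamma\Rhp$ is automatic since $\M_{n+2,\lambda_{-1}}^{\succ 1} > \T_{n+1,\lambda_{-1}}$ by the same facts that make Definition \ref{defn:tabular} well defined. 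Inclusion (2) is entirely analogous: $\T_{n+1,\lambda} = \T_{n,\lambda}\Lhp\M_{n+1,\lambda}\Rhp \subseteq \T_{n+1,\lambda_{-1}}\Lhp\M_{n+2,\lambda_{-1}}\Rhp = \T_{n+2,\lambda_{-1}}$, again using monotonicity of $\k\Lhp-\Rhp$ in both arguments together with (1) and (2) at level $n$.

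The main obstacle, and the only step requiring real care rather than bookkeeping, is the interface between the two towers at the bottom — making precise that $\J_{-1,\lambda} = \R\log\lambda$ genuinely lands inside $\J_{0,\lambda_{-1}}$ and, more importantly, that this seeds the induction correctly so that at the next level $e^{\R\log\lambda} = \lambda^{\R} = \M_{0,\lambda}$ is recovered inside $\M_{1,\lambda_{-1}} = e^{\J_{0,\lambda_{-1}}}$. One must be slightly careful that $\J_{0,\lambda_{-1}}$ contains not just $\R\log\lambda$ but whatever else is needed at the following step; but since each inductive step only ever \emph{enlarges} the tower and the monotonicity of the Hahn-field construction is robust, once the $n=-1$ case is checked the rest is a routine transfinite-free induction on $\Z$. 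A secondary technical point worth spelling out is that all these ``inclusions'' are inclusions of subsets of the fixed ambient field $\R\Lhp\M^{LE}\Rhp$ (equivalently, of $\M^{LE}$ for the $\M$'s), so that ``$\supseteq$'' is literal set containment and no identification of abstract isomorphism types is required; this is what lets monotonicity of $\R\Lhp-\Rhp$ be applied without fuss.
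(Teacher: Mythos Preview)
Your proof is correct and follows the same approach as the paper: the base case $n=-1$ is checked by direct inspection (using $\lambda_{-1}=\log\lambda$ to see $\R\log\lambda\subseteq\J_{0,\lambda_{-1}}$), and the rest is the ``easy induction argument'' that the paper only states but does not write out. One cosmetic slip: when proving (2) and (3) at level $n+1$ you cite ``inductive hypothesis (1)'' for the inclusion $\M_{n+1,\lambda}\subseteq\M_{n+2,\lambda_{-1}}$, but that is (1) at level $n+1$, which you had just derived from the genuine inductive hypothesis (3); the logic is fine, only the label is off.
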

\begin{pf}
	The case $n=-1$ follows from the following inclusions:   
	\begin{itemize}
		\item $\M_{0,\lambda_{-1}} =\lambda_{-1}^\R \supseteq 1 = \M_{-1,\lambda}$;
		\item $\T_{0,\lambda_{-1}} = \R\Lhp\lambda_{-1}^\R\Rhp \supseteq \R= \T_{-1,\lambda}$;  
		\item $\J_{0,\lambda_{-1}} = \R\Lhp\lambda_{-1}^{\R^{>0}}\Rhp\supseteq \J_{-1,\lambda}$. 
	\end{itemize}	
	We can then conclude by an easy induction argument. 
\end{pf}

\section{Levels}
Our next goal is to represent the ordered vector space $\T^\uparrow$ as a lexicographic direct sum $\bigoplus_{n \in\Z} \J_n$ of suitable subspaces which can be characterized in terms of ``levels''. Since $\exp(\T^{\uparrow}) = \M^{LE}$, this will also induce a decomposition of $\M^{LE}$ as a direct sum of multiplicative subroups $\M_n = \exp(\J_{n-1})$. 

Recall that $\x$ is the formal variable of $\T$ and $\x_{-k} = \log_k (\x)$. Mapping $\x$ to $\x_{-k}$ will induce an automorphism of $\T$ sending $\J_n$ to $\J_{n-k}$ and $\M_n$ in $\M_{n-k}$. 

\begin{defn}\label{defn:Mn}
For $n\in \Z$ we define:
\begin{enumerate}
	\item $\T_n = \bigcup_{k\in \Z} \T_{n+k, \x_{-k}}$.  
	\item $\M_n = \bigcup_{k\in \Z} \M_{n+k,\x_{-k}}$. 
	\item $\fN_n = \bigcup_{k\in \Z} \fN_{n+k,\x_{-k}}$. 
	\item $\J_n = \bigcup_{k\in \Z} \J_{n+k,\x_{-k}}$. 
\end{enumerate}
\end{defn}
By Proposition \ref{prop:increasing} all the unions are increasing. 
\begin{rem} We have
 $\M^{LE} = \bigcup_{n\in \N} \fN_n$ and $\exp(\J_n) = \M_{n+1}$ by Definitions \ref{defn:Mn} and \ref{defn:tabular} 
\end{rem}

The following definion is needed to prove that the vector spaces $\J_n$ are in direct sum. 

\begin{defn}[Levels] \label{level}Let $f,g \succ 1$. 
	We say that $f$ and $g$ have the same level if there is $n\in \N$ such that $\log_n(|f|) \asymp \log_n(|g|)$. We say that $f$ has level $n\in \Z$ if it has the same level of $\exp_n(\x)$ and we write in this case $\level(f) = n$.  For $f\prec 1$, we define $\level(f) = \level(1/f)$. By convention we also stipulate that the level of an element $f \asymp 1$ is $-\infty$ and the level of $0$ is undefined. We have:
	\begin{enumerate}
		\item If $1 \not \asymp \sum_{i<\alpha} r_i e^{\gamma_i}\in \T$ is in normal form, then $\level(\sum_{i<\alpha} r_i e^{\gamma_i}) = \level({\gamma_0})+1$. 
		\item $\level(\x^s) = 1$ for all $s\in \R^*$. 
	\end{enumerate} 
\end{defn}
\begin{rem}\label{rem:convex}
The function $\level:\T^* \to \Z\cup \{-\infty\}$ satisfies: 
\begin{enumerate}
	\item $\level(fg) = \max \{\level(f), \level(g)\}$
	\item if $1 \preceq f \preceq g$ then $\level(f) \leq \level (g)$
	\item if $1\prec f , g$ and $\level(f) < \level(g)$, then $|f| < |g|$.  
\end{enumerate}
It follows that $\{\m \in \M^{LE} \mid \level (\m) \leq n\}$ is a convex subgroup of $\M^{LE}$ for all $n\in \Z$.
\end{rem}

\begin{prop} \label{prop:levels}For $n\in \Z$ we have:
	\begin{enumerate}
		\item $\J_n$ is the field of transseries whose support only consists of infinite monomials of level exactly $n$.
		\item $\fN_n = \{\m \in \M^{LE} \mid \level (\m) \leq n\}$; 
		\item $\T_n$ is the field of transseries whose support only contains monomials of level less or equal than $n$. 
	\end{enumerate}
\begin{proof}
	One easily sees by induction that if $n+k \ge 0$ and $1 \neq \m \in \M_{n+k, \x_{-k}}$ then $\level(\m)=n$.
	It follows in particular that if $\n \in \fN_{n+k,\x_{-k}}$ then $\level(\n) \le n$ and that equality holds if and only if $\n \notin \fN_{n+k-1, \x_{-k}}$. Hence the monomials of level $n$ are exactly those contained in $\M_{n}^{\not\asymp 1} \fN_{n-1}= \fN_n \setminus \fN_{n-1}$. All of (1)-(2)-(3) easily follow from this.
\end{proof}
\end{prop}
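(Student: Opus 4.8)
The entire proposition will follow from a single claim, which I prove by induction on $m\ge -1$:
\emph{$(\star)_m$: for every $k\in\Z$, every $\mathfrak m\in\M_{m+1,\x_{-k}}\setminus\{1\}$ has $\level(\mathfrak m)=(m+1)-k$.}
Since $\M_{m+1,\lambda}=e^{\J_{m,\lambda}}$, the elements of $\J_{m,\lambda}$ are purely infinite, and a monomial $\mathfrak m=e^\delta$ has normal form $1\cdot e^\delta$, property~(1) of Definition~\ref{level} gives $\level(\mathfrak m)=\level(\delta)+1$; hence $(\star)_m$ is equivalent to: \emph{every nonzero $\delta\in\J_{m,\x_{-k}}$ has $\level(\delta)=m-k$}, and I will verify this form.

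For $m=-1$ one has $\J_{-1,\x_{-k}}=\R\log\x_{-k}=\R\,\x_{-(k+1)}$, and $\x_{-(k+1)}=\exp_{-(k+1)}(\x)$ has level $-(k+1)=(-1)-k$ by Definition~\ref{level} (a nonzero real multiple of a monomial has the same level). For the step $m\ge 0$, take $0\ne\delta\in\J_{m,\x_{-k}}$ and write $\delta=\sum_i c_i\mathfrak n_i$ with the $\mathfrak n_i\in\M_{m,\x_{-k}}^{\succ 1}$ strictly decreasing and $c_i\in\T_{m-1,\x_{-k}}$ (when $m=0$ simply $\delta=\sum_i r_i\x_{-k}^{s_i}$ with $r_i\in\R$ and $s_0>s_1>\cdots>0$). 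Because $\M_{m,\x_{-k}}^{\succ 1}>\fN_{m-1,\x_{-k}}$, the dominant monomial of $\delta$ is $\lm(\delta)=\lm(c_0)\,\mathfrak n_0$, and $\level(\delta)=\level(\lm(\delta))$ (again by Definition~\ref{level}(1)); by Remark~\ref{rem:convex}(1) this equals $\max\{\level(\lm(c_0)),\level(\mathfrak n_0)\}$. Here the inductive hypothesis applied to $\mathfrak n_0\in\M_{m,\x_{-k}}\setminus\{1\}$ gives $\level(\mathfrak n_0)=m-k$, while $\lm(c_0)\in\fN_{m-1,\x_{-k}}=\M_{0,\x_{-k}}\cdots\M_{m-1,\x_{-k}}$ has, by the inductive hypothesis together with Remark~\ref{rem:convex}(1), level at most $(m-1)-k$. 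Hence $\level(\delta)=m-k$ and the induction closes.

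Now I derive the three assertions. From $(\star)$, the direct product decomposition $\fN_{p,\x_{-k}}=\M_{0,\x_{-k}}\cdots\M_{p,\x_{-k}}$ and Remark~\ref{rem:convex}(1), every $\mathfrak q\in\fN_{p,\x_{-k}}$ satisfies $\level(\mathfrak q)\le p-k$, with equality exactly when the $\M_{p,\x_{-k}}$-component of $\mathfrak q$ is nontrivial, i.e.\ when $\mathfrak q\notin\fN_{p-1,\x_{-k}}$. Passing to the increasing unions over $k$ (Definition~\ref{defn:Mn}, Proposition~\ref{prop:increasing}) and using $\M^{LE}=\bigcup_n\fN_n$ yields part~(2): $\fN_n=\{\mathfrak m\in\M^{LE}:\level(\mathfrak m)\le n\}$, so the monomials of level exactly $n$ are those in $\fN_n\setminus\fN_{n-1}$. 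Part~(3) follows: any $f\in\T$ lies in some $\T_{p,\x_{-k}}=\R\Lhp\fN_{p,\x_{-k}}\Rhp$, and ``all monomials of $\supp(f)$ have level $\le n$'' is, by part~(2) and the equality clause, the same as $\supp(f)\subseteq\fN_{n+k,\x_{-k}}$, i.e.\ $f\in\T_{n+k,\x_{-k}}\subseteq\T_n$; conversely $\T_n=\bigcup_k\T_{n+k,\x_{-k}}$ consists of series supported in the $\fN_{n+k,\x_{-k}}$, all of whose monomials have level $\le n$. For part~(1): the support of an element of $\J_{n+k,\x_{-k}}$ lies in $\fN_{n+k-1,\x_{-k}}\M_{n+k,\x_{-k}}^{\succ 1}$ (for $n+k\ge 1$; the cases $n+k\in\{0,-1\}$ are immediate), and each such monomial is infinite (its nontrivial top factor lies in $\M_{n+k,\x_{-k}}^{\succ 1}$, which strictly dominates $\fN_{n+k-1,\x_{-k}}$) and of level exactly $n$ by the previous paragraph; conversely, given $f\in\T$ whose support consists of infinite level-$n$ monomials, pick $p,k$ with $f\in\T_{p,\x_{-k}}$ and $n+k\ge 1$ (possible by Proposition~\ref{prop:increasing}) — then each $\mathfrak m\in\supp(f)$ lies in $\fN_{n+k,\x_{-k}}\setminus\fN_{n+k-1,\x_{-k}}$, so $\mathfrak m=\mathfrak q\,\mathfrak n$ with $\mathfrak q\in\fN_{n+k-1,\x_{-k}}$ and $1\ne\mathfrak n\in\M_{n+k,\x_{-k}}$, and $\mathfrak n$ is forced to be infinite (otherwise $\mathfrak n^{-1}\in\M_{n+k,\x_{-k}}^{\succ 1}$ strictly dominates $\mathfrak q$, whence $\mathfrak m=\mathfrak q\mathfrak n\prec 1$, contradicting that $\mathfrak m$ is infinite); hence $\mathfrak m\in\fN_{n+k-1,\x_{-k}}\M_{n+k,\x_{-k}}^{\succ 1}$, so $f\in\J_{n+k,\x_{-k}}\subseteq\J_n$.

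The one point requiring genuine care is the identity $\lm(\delta)=\lm(c_0)\,\mathfrak n_0$ used in the induction: one needs that inside $\T_{m-1,\lambda}\Lhp\M_{m,\lambda}^{\succ 1}\Rhp$ the dominant monomial always has nontrivial $\M_{m,\lambda}$-factor and that this factor governs the level — which is exactly what the inequality $\M_{m,\lambda}^{\succ 1}>\fN_{m-1,\lambda}$ and the lexicographic product structure of $\fN_{m,\lambda}$ supply, and what also makes infinite and infinitesimal monomials behave symmetrically in the last step. Everything else is bookkeeping with the increasing unions of Definition~\ref{defn:Mn}, Proposition~\ref{prop:increasing}, and the elementary properties of $\level$ recorded in Remark~\ref{rem:convex}.
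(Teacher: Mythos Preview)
Your proof is correct and follows essentially the same route as the paper: both hinge on the inductive claim that every nontrivial $\mathfrak m\in\M_{n+k,\x_{-k}}$ has $\level(\mathfrak m)=n$ (your $(\star)$ is this claim reparametrized), and then extract (1)--(3) from it together with the product decomposition of $\fN_{n,\lambda}$. You simply flesh out what the paper compresses into ``One easily sees by induction\ldots All of (1)-(2)-(3) easily follow,'' carrying out the induction through the exponents $\delta\in\J_{m,\lambda}$ and writing the converse inclusions explicitly; one very minor point is that the justification ``$\level(\delta)=\level(\lm(\delta))$'' is really the trivial fact that $\delta\asymp\lm(\delta)$ rather than Definition~\ref{level}(1).
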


\begin{prop} \label{Dec&Char} We have $\J_{n+1}^{>0}>\J_n$ and
	$\T^\uparrow$ is the direct sum $$\T^\uparrow = \bigoplus_{n\in \Z} \J_n$$ as an ordered $\R$-vector space .
\end{prop}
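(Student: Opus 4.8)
The plan is to combine the level decomposition (Proposition~\ref{prop:levels}) with the ordered structure of Hahn fields. First I would establish the inequality $\J_{n+1}^{>0} > \J_n$. By Proposition~\ref{prop:levels}(1), a nonzero $f \in \J_n$ has all its monomials of level exactly $n$, and similarly a positive $g \in \J_{n+1}$ has leading monomial $\m = \lm(g) \in \M^{LE}$ of level $n+1$, with $\lc(g) > 0$, so $g \asymp \m$ and $g > 0$. By Remark~\ref{rem:convex}(3) applied to $\m$ and $\lm(f)$ (both infinite, with $\level(\lm(f)) = n < n+1 = \level(\m)$), we get $|\lm(f)| < \m$, hence $|f| \asymp |\lm(f)| \prec \m \asymp g$, which gives $f < g$. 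Thus every positive element of $\J_{n+1}$ dominates every element of $\J_n$, as required; iterating, $\J_m^{>0} > \J_n$ whenever $m > n$.

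Next I would show the sum $\sum_{n \in \Z} \J_n$ is direct and equals $\T^\uparrow$. For directness, suppose $f_{n_1} + \dots + f_{n_k} = 0$ with $n_1 < \dots < n_k$ and each $f_{n_j} \in \J_{n_j}$ nonzero; then each $f_{n_j}$ has support consisting exactly of monomials of level $n_j$ (Proposition~\ref{prop:levels}(1)), and since the level function takes distinct values $n_1, \dots, n_k$ on these supports, the supports are pairwise disjoint. Hence the coefficient-wise sum cannot vanish unless all $f_{n_j} = 0$, contradiction. For surjectivity onto $\T^\uparrow$: given $f \in \T^\uparrow$, its support $S = \supp(f) \subseteq \M^{LE, \succ 1}$ is reverse-well-ordered, and partitioning $S$ by level, $S = \bigsqcup_{n \in \Z} S_n$ where $S_n = \{\m \in S \mid \level(\m) = n\}$, the family $(f\rest S_n)_{n}$ is summable with sum $f$, and each $f \rest S_n$ is a transseries (being a truncation of $f$; one may invoke that $\T$ is closed under truncation, or note directly that $f \rest S_n \in \T_{n+k,\x_{-k}}^\uparrow$ for suitable $k$ since $S$ is covered by finitely many... — actually $S$ need not be covered by finitely many $\fN_n$, but each $\m \in S_n$ lies in $\fN_n \setminus \fN_{n-1}$, and $f \rest S_n$ has support in $\fN_n$; the point is that $f \in \T_{m,\lambda}$ for some $m,\lambda$ and then $f \rest S_n$ lies in the same field, hence in $\J_n$ by Proposition~\ref{prop:levels}(1)). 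So $f = \sum_n f\rest S_n \in \bigoplus_n \J_n$.

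Finally I would check that the direct sum is a sum of ordered vector spaces in the lexicographic sense, i.e.\ that the sign of a nonzero $f = \sum_n f_n$ (with $f_n \in \J_n$, finitely many nonzero) is the sign of $f_N$ where $N$ is the largest index with $f_N \neq 0$. This follows from the inequality established in the first step: $|f_N|$ dominates $\sum_{n < N} f_n$ since each $|f_n| \prec |f_N|$ (again via Remark~\ref{rem:convex}(3) on leading monomials of distinct levels), so $f \sim f_N$ and $\sign(f) = \sign(f_N)$.

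The main obstacle I anticipate is the surjectivity step — specifically, verifying that each level-truncation $f\rest S_n$ of a transseries is again a transseries. This is where one genuinely needs the structural input about $\T$, namely that each $f$ lies in some $\T_{m,\lambda}$ and that these are Hahn fields $\R\Lhp \fN_{m,\lambda}\Rhp$ closed under restricting the support to any subset; the level stratification of $\fN_{m,\lambda}$ from Proposition~\ref{prop:levels}(2) then guarantees $f\rest S_n$ has support in $\fN_n$ and consists of level-$n$ monomials, placing it in $\J_n$. The other steps are essentially bookkeeping with leading monomials and the convexity facts in Remark~\ref{rem:convex}.
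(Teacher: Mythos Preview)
Your proof is correct and follows essentially the same approach as the paper: the inequality via Remark~\ref{rem:convex}(3), directness via the disjointness of supports guaranteed by Proposition~\ref{prop:levels}(1), and surjectivity via the fact that any $f\in\T^\uparrow$ lies in some full Hahn field $\T_{m,\lambda}=\R\Lhp\fN_{m,\lambda}\Rhp$, which is closed under support restriction. The paper compresses the surjectivity step into a citation of Equation~\eqref{rem:forms:eqn1}, but your level-partition argument unpacks exactly the same content.
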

\begin{pf} The inequality $\J_{n+1}^{>0}>\J_n$ follows from Remark \ref{rem:convex}(3). 
	The sum is direct by Proposition \ref{prop:levels}(1) and it is equal to $\T^\uparrow$ by Remark \ref{rem:forms}, Equation \ref{rem:forms:eqn1}.  
\end{pf}

\begin{cor}
 The group $\M^{LE}$ is the multiplicative direct sum of the subgroups $\M_n$. 
\end{cor}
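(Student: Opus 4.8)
The plan is to transport the direct sum decomposition $\T^\uparrow = \bigoplus_{n\in\Z}\J_n$ of \prettyref{Dec&Char} through the ordered group isomorphism $\exp\colon(\T^\uparrow,+,0,<)\xrightarrow{\ \sim\ }(\M^{LE},\cdot,1,\prec)$ provided by the Fact in Section~3. First I would recall, from the Remark following \prettyref{defn:Mn}, that $\exp$ carries the subspace $\J_n$ onto the subgroup $\M_{n+1}$. Since $n$ ranges over all of $\Z$, the family $(\exp(\J_n))_{n\in\Z}=(\M_{n+1})_{n\in\Z}$ is, after the harmless reindexing $n\mapsto n-1$, exactly the family $(\M_n)_{n\in\Z}$ appearing in the statement.

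Next I would use the elementary fact that a group isomorphism sends an internal direct sum to an internal direct sum. Concretely: every $f\in\T^\uparrow$ is uniquely a finite sum $f=\sum_n f_n$ with $f_n\in\J_n$, so $\exp(f)=\prod_n \exp(f_n)$ exhibits every element of $\M^{LE}=\exp(\T^\uparrow)$ as a finite product of elements of the $\M_{n+1}$; uniqueness of this factorization follows by applying $\log$ and invoking the uniqueness of the decomposition in $\T^\uparrow$. This already gives that $\M^{LE}$ is the internal direct sum of the subgroups $\M_n$ as an abstract group.

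Finally, to upgrade this to a direct sum of \emph{ordered} groups, I would note that $\exp$ is order preserving, so it transports the inequalities $\J_{n+1}^{>0}>\J_n$ of \prettyref{Dec&Char} into $\M_{n+1}^{\succ1}>\M_n$; equivalently, by \prettyref{prop:levels}(2) the convex subgroups $\fN_n=\{\m\in\M^{LE}\mid\level(\m)\le n\}$ realize the partial products $\prod_{k\le n}\M_k$, so the sign of a product $\prod_n\m_n$ is governed by its nontrivial factor of highest level, which is precisely the lexicographic order attached to the decomposition. There is no real obstacle here: the argument is a direct pushforward of \prettyref{Dec&Char} along $\exp$, and the only point needing a word of care is the index shift between $\J_n$ and $\M_{n+1}$, which is absorbed by the fact that both index sets equal $\Z$.
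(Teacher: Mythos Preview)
Your proposal is correct and matches the paper's intended argument: the corollary is stated without proof precisely because it is the pushforward of Proposition~\ref{Dec&Char} along the ordered group isomorphism $\exp\colon(\T^\uparrow,+,0,<)\to(\M^{LE},\cdot,1,<)$, using $\exp(\J_n)=\M_{n+1}$ from the remark after Definition~\ref{defn:Mn}. Your handling of the index shift and of the ordered (lexicographic) structure is exactly what is needed and introduces no new ideas beyond those already in the paper.
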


\section{Summability}\label{sec:Summability}
In this section we introduce the ideal of subsets of $\M^{LE}$ mentioned in the introduction. 

\begin{defn}
	Given a family $(\m_i)_{i<\alpha}$ of monomials in $\M^{LE}$, we say that $(\m_i)_{i<\alpha}$ is {\em $\T$-summable} if $(\m_i)_{i<\alpha}$ is summable and the Hahn series $\sum_{i<\alpha} \m_i$ belongs to $\T\subseteq \R\Lhp \M^{LE}\Rhp$. 
	Note that if $(\m_i)_{i<\alpha}$ is $\T$-summable, then for every sequence of non-zero real numbers $(r_i)_{i<\alpha}$ we have $\sum_{i<\alpha} r_i \m_i \in \T$. 
\end{defn}

%
%
We give below a reformulation of $\T$-summability which is more convenient for our treatment. 

\begin{defn}\label{BDef}
	Given a set $X$, we say that $\mathcal{X}$ is a \emph{bornology} on $X$ if it is an ideal in the posets of subsets of $X$ whose union is $X$, that is:
	\begin{enumerate}
		\item whenever $Z\subseteq Y \in \mathcal{X}$ one has $Z \in \mathcal{X}$  
		\item whenever $Z, Y \in \mathcal{X}$ one has $Z\cup Y \in \mathcal{X}$
		\item $\bigcup \mathcal{X} = X$.
	\end{enumerate}
	
	Given $X_\mathcal{X}= (X, \mathcal{X})$ we say that a subset $S \subseteq X$ is $\mathcal{X}$-bounded if it is a subset of some element of $\mathcal{X}$.
	
	A map between sets endowed with a bornology $f: X_{\mathcal{X}} \rightarrow Y_{\mathcal{Y}}$ is said to be \emph{bounded} if the image of any bounded subset is a bounded subset. A bijection is said to be \emph{bi-bounded} if it is bounded with bounded inverse.
	
	Let $Y \subseteq X$ and $\mathcal{X}$ a bornology on $X$, then $Y$ naturally carries a bornology $$\mathcal{X}|_Y=\{Z\cap Y: Z \in \mathcal{X}\}$$
	consisting of those subsets that are $\mathcal{X}$-bounded when regarded as subsets of $X$. It is the largest bornology making the inclusion a bounded map.
	
	Given a set $X$ and a family of subsets $\mathcal{F}$ whose union is $X$, the smallest bornology containing $\mathcal{F}$ is said to be the bornology \emph{generated} by $\mathcal{F}$. If $\mathcal{F}$ is upward directed, the bornology generated by $\mathcal{F}$ is the family of subsets of $X$ that are contained in some $F \in \mathcal{F}$.
\end{defn}

\begin{defn}\label{defn:bornologies} We introduce the following bornologies on $\M^{LE}, \T, \T_0[t^{\pm 1}]$. 
	\begin{enumerate}
		\item $\mathcal{M}$ is the bornology on $\M^{LE}$ generated by  the subgroups $\mathfrak{N}_{n, \x_{-k}}$ for $n,k \in \N$
		\item $\mathcal{T}$ is the bornology on $\T$ generated by the subfields $\T_{n,\x_{-k}}$ for $n,k\in \N$. 
		\item We also consider on $\T_0[t^{\pm 1}]$ the bornology generated by the subgroups $E[t^{[m,n]}]= t^{n}E +\cdots + t^{m}E$ where $E$ is a $\mathcal{T}$-bounded subfield of $\T_0$ and $m\leq n$ are in $\Z$. 
	\end{enumerate} 
\end{defn}

\begin{rem}
	With the above definition it follows that 
	a set of monomials $S\subseteq \M^{LE}$ is $\T$-summable if and only if it is reverse well ordered and $\mathcal{M}$-bounded. 
\end{rem}

\begin{rem}\label{ExpBounded}
	Note that $\mathcal{M}= \mathcal{T}|_{\M^{LE}}$ and that
	that $\exp$ and $\log$ are bi-bounded maps with respect to the bornologies we just introduced as $\exp( \T_{n, \x_{-k}}) \subseteq \T_{n+1, \x_{-k}}$ and $\log( \T_{n, \x_{-k}}^{>0} ) \subseteq \T_{n, \x_{-k-1}}$.
\end{rem}

\section{The crucial isomorphism}
We recall that $\T_n$ is the field of transseries whose support only contains monomials of level less or equal than $n$ and $\J_n$ is the field of transseries whose support only consists of infinite monomials of level exactly $n$. We shall prove that there is an isomorphism $\T_n \cong \J_n$ of ordered vector spaces. 

\begin{prop}\label{PureIso}
For each $n\in \Z$, there is an isomorphism of ordered $\R$-vector spaces $f_n: \T_n\simeq \J_n$. Moreover the isomorphism maps $\T_{n+k, \x_{-k}}\subseteq \T_n$ onto $\J_{n+k, \x_{-k}}\subseteq \J_n$ for $ n+k \ge-1$ so it is bi-bounded with respect to the bornologies $\mathcal{T}|_{\T_n}$ and $\mathcal{T}|_{\J_n}$.
\end{prop}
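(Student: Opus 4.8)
The plan is to build the isomorphism $f_n$ by induction along the chain $\T_{n-1,\x_{-(-1)}} = \T_{n-1,\x_{1}} \subseteq \T_{n,\x_0} \subseteq \T_{n+1,\x_{-1}} \subseteq \cdots$ whose union is $\T_n$, producing compatible isomorphisms $f_{n,k}: \T_{n+k,\x_{-k}} \simeq \J_{n+k,\x_{-k}}$ of ordered $\R$-vector spaces and then taking the direct limit. The base case is $k$ with $n+k = -1$: here $\T_{-1,\lambda} = \R$ and $\J_{-1,\lambda} = \R\log(\lambda)$, so $f_{n,k}$ is just the scaling $r \mapsto r\log(\x_{-k})$, visibly an ordered vector space isomorphism. (One could also start at $n+k=0$, where $\T_{0,\lambda} = \R\Lhp\lambda^\R\Rhp$ and $\J_{0,\lambda} = \R\Lhp\lambda^{\R^{>0}}\Rhp$, and use the order isomorphism $\R \to \R^{>0}$ of exponents; but the $-1$ case is cleaner for the induction.)

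For the inductive step, suppose $f_{n,k}: \T_{n+k,\x_{-k}} \simeq \J_{n+k,\x_{-k}}$ has been constructed, and set $m = n+k$, $\lambda = \x_{-k}$, $\mu = \lambda_{-1} = \x_{-(k+1)}$. I must extend to $\T_{m+1,\mu} \simeq \J_{m+1,\mu}$. By Remark~\ref{rem:forms}, every $f \in \T_{m+1,\mu}$ writes uniquely as $f = \sum_{i<\alpha} k_i e^{\gamma_i}$ with $k_i \in \T_{m,\mu}$ and $\gamma_i \in \J_{m,\mu}$, i.e.\ $\T_{m+1,\mu} = \T_{m,\mu}\Lhp\M_{m+1,\mu}\Rhp$; and analogously, since $\fN_{m+1,\mu}^{\succ 1} = \fN_{m,\mu}^{\succ 1} \cup (\fN_{m,\mu}\M_{m+1,\mu}^{\succ 1})$ with the first block below the second, $\J_{m+1,\mu} = \J_{m,\mu} \oplus \big(\T_{m,\mu}\Lhp\M_{m+1,\mu}^{\succ 1}\Rhp\big)$ as ordered vector spaces. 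So it suffices to produce an isomorphism of ordered $\T_{m,\mu}$-coefficient Hahn structures $\T_{m,\mu}\Lhp\M_{m+1,\mu}\Rhp \simeq \T_{m,\mu}\Lhp\M_{m+1,\mu}^{\succ 1}\Rhp$ — that is, an order isomorphism of the monomial group $\M_{m+1,\mu} = \exp(\J_{m,\mu})$ onto its positive cone $\M_{m+1,\mu}^{\succ 1} = \exp(\J_{m,\mu}^{>0})$ that is induced by an ordered vector space isomorphism of exponents $\J_{m,\mu} \simeq \J_{m,\mu}^{>0}$. The key point is that $\T_{m,\mu} \supseteq \T_{m,\lambda}$ by Proposition~\ref{prop:increasing}, and the inductively given $f_{n,k}$ has image $\J_{n+k,\lambda} = \J_{m,\lambda}$; but what I actually need is an ordered isomorphism $\J_{m,\mu} \simeq \J_{m,\mu}^{>0}$. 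The natural candidate: $\J_{m,\mu} = \T_{m,\mu}^\uparrow$'s top level, and using the level decomposition $\J_{m,\mu} = \bigoplus \J_{j,\mu}$ over $j \le m$ from Equation~\eqref{rem:forms:eqn1}, with $\J_{-1,\mu} = \R\log(\mu)$ an Archimedean line, I can split off that bottom copy of $\R$ and use $\R \simeq \R^{>0}$ there while keeping everything above fixed — the standard trick for mapping an ordered vector space onto its positive cone when it has a distinguished Archimedean bottom summand over which everything is "eventually large". Compatibility with the previous stage is ensured because this recipe only touches the summand $\J_{-1,\mu}$, which sits inside $\T_{-1,\mu}$-worth of data and is preserved when passing from $\lambda$ to $\mu$.

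Once the isomorphisms $f_{n,k}$ are defined and shown to commute with the inclusions $\T_{n+k,\x_{-k}} \hookrightarrow \T_{n+k+1,\x_{-(k+1)}}$ and $\J_{n+k,\x_{-k}} \hookrightarrow \J_{n+k+1,\x_{-(k+1)}}$, I set $f_n = \bigcup_k f_{n,k}$; it is an ordered $\R$-vector space isomorphism $\T_n \to \J_n$ because each $f_{n,k}$ is and the unions are increasing (Proposition~\ref{prop:increasing}, Definition~\ref{defn:Mn}). Since by construction $f_n(\T_{n+k,\x_{-k}}) = \J_{n+k,\x_{-k}}$ for all $k$ with $n+k \ge -1$, and the bornologies $\mathcal{T}|_{\T_n}$, $\mathcal{T}|_{\J_n}$ are generated by exactly these subfields (this uses that the generators $\T_{n,\x_{-k}}$ of $\mathcal{T}$ restrict to $\T_n$ as the $\T_{n+k',\x_{-k}}$, cofinally), $f_n$ is bi-bounded. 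The main obstacle is the inductive step: getting a single, canonical ordered vector space isomorphism $\J_{m,\mu} \simeq \J_{m,\mu}^{>0}$ that is simultaneously (a) order preserving, (b) compatible across all stages so the direct limit exists, and (c) bounded in both directions — i.e.\ sends the filtration pieces to filtration pieces. I expect the cleanest route is to make the choice uniformly in terms of the level decomposition $\J_{m,\mu} = \R\log(\mu) \oplus \bigoplus_{-1 < j \le m}\J_{j,\mu}$, contracting only the Archimedean line $\R\log(\mu)$ via a fixed order isomorphism of $\R$ with $\R^{>0}$; verifying that this yields an order isomorphism onto $\J_{m,\mu}^{>0}$ (rather than merely an injection) is the delicate computation, and it is where the hypothesis $\J_{j+1,\mu}^{>0} > \J_{j,\mu}$ from Proposition~\ref{Dec&Char} does the work.
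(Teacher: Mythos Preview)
Your inductive setup is reasonable, but the construction of the order isomorphism $\J_{m,\mu}\simeq\J_{m,\mu}^{>0}$ in the inductive step fails. First, the decomposition you invoke is wrong: Equation~\ref{rem:forms:eqn1} decomposes $\T_{m,\mu}^\uparrow$, not $\J_{m,\mu}$; the latter is by definition the single summand $\T_{m-1,\mu}\Lhp\M_{m,\mu}^{\succ 1}\Rhp$ and does not contain $\J_{-1,\mu}=\R\log\mu$ at all (indeed $\log\mu\notin\T_{m,\mu}$ for any $m\ge 0$). The earlier line ``$\J_{m+1,\mu}=\J_{m,\mu}\oplus\T_{m,\mu}\Lhp\M_{m+1,\mu}^{\succ 1}\Rhp$'' has the same confusion, since $\J_{m+1,\mu}$ \emph{equals} that second summand. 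More seriously, even granting a lexicographic splitting $V=W\oplus A$ with $W^{>0}>A$ and $A$ Archimedean, the recipe ``keep the $W$-component fixed and contract $A$ onto $A^{>0}$'' is \emph{not} an order isomorphism onto $V^{>0}$: its image $W\times A^{>0}$ contains elements with negative $W$-component (hence negative in $V$) and misses every $(w,a)\in V^{>0}$ with $w>0$ and $a\le 0$. There is no ``standard trick'' of this shape; obtaining $V\simeq V^{>0}$ genuinely requires moving elements across $W$-levels.

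The paper's route supplies the missing idea. It runs the induction on the first index with $\lambda$ \emph{fixed}, so that at stage $n$ one already has $f_{n,\lambda}:\T_{n,\lambda}\simeq\J_{n,\lambda}$. Since $\T_{n,\lambda}$ is a \emph{field}, the piecewise-rational map $h(x)=x+1$ for $x\ge 0$, $h(x)=1/(1-x)$ for $x\le 0$ is an order isomorphism $\T_{n,\lambda}\simeq\T_{n,\lambda}^{>0}$; transporting it through the inductively built $f_{n,\lambda}$ yields the needed $f_{n,\lambda}\circ h\circ f_{n,\lambda}^{-1}:\J_{n,\lambda}\simeq\J_{n,\lambda}^{>0}$, from which $f_{n+1,\lambda}$ is defined on $\T_{n+1,\lambda}=\T_{n,\lambda}\Lhp e^{\J_{n,\lambda}}\Rhp$. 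The gluing $f_{n+1,\log\lambda}\supseteq f_{n,\lambda}$ is then verified separately by a short induction. Your scheme of stepping $(m,\lambda)\to(m+1,\log\lambda)$ at once creates exactly the mismatch you yourself flag---the hypothesis concerns $\J_{m,\lambda}$ while the construction needs $\J_{m,\mu}$---and the proposed bottom-line contraction does not close that gap.
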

\begin{proof}
	\newcommand{\Isoto}{\overset{\sim}{\longrightarrow}}
	It suffices to show that for each $k,n\in \Z$ such that $n+k\ge -1$ there is an isomorphism $f_{n+k,\x_{-k}}: \T_{n+k,\x_{-k}}\simeq \J_{n+k,\x_{-k}}$ and that these isomorphisms can be glued together so to define the $f_n$ s as
	$$f_n:= \bigcup_{k \ge -n-1} f_{n+k, \x_{-k}} : \T_{n} \longrightarrow \J_{n}.$$
	Easing the notation, we shall consider isomorphisms
	$$f_{n, \lambda} : \T_{n,\lambda} \Isoto \J_{n, \lambda} $$
	for any $\lambda \in \log_{\mathbb{Z}}(\x)$ and $n \ge -1$, and we define them by induction on $n$ starting at $n=-1$.
	 
	Note that the relation $\T_{n+1, \lambda}=\T_{n, \lambda}\Lhp e^{\J_{n,\lambda}}\Rhp$ holds for every $n\ge -1$.
	 
	First let $h: \T \rightarrow \T$ be defined as 
	\begin{equation}\label{hIso}
	h(x)= \begin{cases}
		x+1 &\text{if}\; x\ge 0\\
		\frac{1}{1-x} &\text{if}\; x \le 0.
		\end{cases}
	\end{equation}
	We shall use only the fact that $h$ is an order isomorphism $\T \simeq \T^{>0}$ mapping $0$ to $1$ and restricting to $h|: \T_{n, \lambda} \Isoto \T_{n, \lambda}^{>0}$ for every $n\in \mathbb{Z}$ and $\lambda \in \exp_\Z(\x)$.
	
	We build inductively $f_{n, \lambda}$ as follows: for $n=-1$ we just set
	$$f_{-1, \lambda} : \T_{-1,\lambda}=\mathbb{R} \Isoto \J_{-1, \lambda}=\log(\lambda)\mathbb{R} \qquad f_{-1, \lambda}(r)= \log(\lambda) r$$
	Then to define $f_{n+1,\lambda}$ from $f_{n,\lambda}$ we use 
	$$\T_{n+1,\lambda}= \T_{n,\lambda}\Lhp \exp(\J_{n,\lambda})\Rhp \quad \quad \text{and} \quad \quad \J_{n+1,\lambda} = \T_{n,\lambda}\Lhp \exp(\J^{>0}_{n,\lambda})\Rhp.$$
	Assuming inductively that we have an isomorphism
	$$f_{n, \lambda} : \T_{n,\lambda} \Isoto \J_{n,\lambda}$$ 
	composing with the order isomorphism $h|_{\T_{n,\lambda}}:\T_{n,\lambda}\simeq \T_{n,\lambda}^{>0}$ we obtain an induced order isomorphism
	$$f_{n, \lambda} \circ h\circ f_{n, \lambda}^{-1} : \J_{n,\lambda} \Isoto \J^{>0}_{n,\lambda},$$ which in turn induces an isomorphism 
	$$f_{n+1, \lambda} : \T_{n+1,\lambda} \Isoto \J_{n+1,\lambda}$$
	as follows: every element $x$ of $\T_{n+1, \lambda}$ may be written uniquely as
	\[x= \sum_{i< \alpha} k_i \exp(\gamma_i) \qquad k_i \in \T_{n,\lambda}\setminus \{0\} \qquad \gamma_i \in \J_{n, \lambda}\]
	then one sends $x$ to
	\[ f_{n+1, \lambda}(x) = \sum_{i< \alpha} k_i \exp\big(f_{n, \lambda} \circ h \circ f_{n, \lambda}^{-1} (\gamma_i)\big)\] 
	This way we end up with a family of isomorphisms $f_{n, \x_{-k}}: \T_{n, \x_{-k}} \Isoto \J_{n, \x_{-k}}$.
	
	The glueing needed then follows from the following claim:\smallskip
	
	\centerline{\textit{for every $\lambda = \x_{-k}$ and every $n\ge -1$, $f_{n+1, \log(\lambda)}$ extends $f_{n, \lambda}$}.}\smallskip
	To prove this claim we proceed by induction on $n$. For $n=-1$ we easily see that for every $r \in \mathbb{R}$ we have 
	\[\begin{split}
	f_{0, \log(\lambda)}(r)
	&=  r \exp(f_{-1,\log(\lambda)} \circ h \circ f_{-1, \log(\lambda)}^{-1}(0\Rhp =\\
	&=  r \exp(f_{-1,\log(\lambda)} \circ h (0\Rhp =\\
	&=  r \exp(f_{-1,\log(\lambda)} (1) ) = r \log(\lambda) = f_{-1, \lambda}(r).
	\end{split}\]
	
	As for the inductive case, assume $f_{n+1, \log(\lambda)}$ extends $f_{n, \lambda}$, and let us prove that $f_{n+2, \log(\lambda)}$ extends $f_{n+1, \lambda}$: let $x \in \T_{n+1, \lambda}\subseteq \T_{n+2, \log(\lambda)}$, the crucial observation is that if we write $x$ as
	\[ x= \sum_{i< \alpha} k_i \exp(\gamma_i) \qquad k_i \in \T_{n+1,\log(\lambda)}\setminus \{0\} \qquad \gamma_i \in \J_{n+1, \log(\lambda)}\]
	since $x \in\T_{n+1, \lambda}$ one has that actually $k_i \in \T_{n, \lambda}$ and $\gamma_i \in \J_{n, \lambda}$. Hence  recalling the inductive hypothesis we have
	\begin{align*}
	f_{n+2, \log(\lambda)}(x)= \sum_{i< \alpha} k_i \exp\big(f_{n+1, \log(\lambda)} \circ h \circ f_{n+1, \log(\lambda)}^{-1} (\gamma_i)\big)=\\
	=  \sum_{i< \alpha} k_i \exp\big(f_{n,\lambda} \circ h \circ f_{n, \lambda}^{-1} (\gamma_i)\big)= f_{n+1,\lambda}(x).
	 \end{align*}
	 This completes the proof.
\end{proof}	

\begin{rem}
Note that by construction $f_n$ is strognly $\mathbb{T}_{n-1}$-linear. More preciesly, given a transseries of the form $\sum k_i e^{\gamma_i}$ where $k_i \in \T_{n-1}$ and $\gamma_i \in \J_{n-1}$, we have that
$$f_n\left(\sum k_i e^{\gamma_i}\right) = \sum k_i e^{f_{n-1} \circ h \circ f_{n-1}^{-1}(\gamma_i)}.$$
This can be used to compute $f_n$ using as a base case $f_n\rest \R$ which is given by $f_n(r) = r \x_n$. In particular $f_0(r) = r\x$. Let us also note that $f_n\rest \T_{0,\x_n}$ is given by
$f_n(\sum_{i<\alpha} r_i \x_n^{s_i}) = \sum_{i<\alpha} r_i \x_n^{h(s_i)}$. 
\end{rem}

\begin{example} Consider the transseries $\exp(\x e^{-\x})$. Its normal form is 
$$\exp(\x e^{-\x})=\sum_{n\in\mathbb{N}} \frac{\x^n e^{-n\x}}{n!} \in \mathbb{T}_{1, \x}\subseteq \T_{2, \x}$$
We compute $f_1$ and $f_2$ on $\exp(\x e^{-\x})$. 
\begin{itemize}
\item $\displaystyle{f_{1}(\exp(\x e^{-\x}\Rhp= \sum_{n\ge 0} \frac{\x^n e^{f_0 \circ h \circ f_0^{-1}(-n\x)}}{n!}=\sum_{n\ge 0} \frac{\x^n e^{\frac{1}{n+1} \x}}{n!}} \in \J_{1, \x}$, because\\
$f_0 \circ h \circ f_0^{-1}(-n\x) = f_0 \circ h (- n ) = \frac{1}{n+1}\x$. 
\smallskip
\item $\displaystyle{
	f_{2}(\exp(\x e^{-\x}\Rhp = \exp(\x e^{-\x}) e^{f_1 \circ h \circ f_1^{-1}(0)} = \exp(\x e^{-\x}) e^{e^{\x}}\in \J_{2,\x}
}$ 
\end{itemize}
\end{example}

\section{The group of monomials is order isomorphic to its positive cone}
In this section we show that there is a bi-bounded order isomorphism $\M^{LE} \cong \M^{LE,\prec 1}$. Since $\exp(\T^\uparrow) = \M^{LE}$ and $\exp$ is bi-bounded (Remark \ref{ExpBounded}) this reduces to show that there is a bi-bounded order isomorphism $T^{\uparrow}\cong \T^{\uparrow,>0}$. In turn this depends on the fact that $\T^{\uparrow}$ is isomorphic to the ordered vector space $\T_0[t^{\pm 1}]$ of Laurent polynomials over the field $\T_0$. In fact we will show that, for any ordered field $K$, the vector space $K[t^{\pm 1}]$ is order isomophic to its positive cone. Since all the relevant isomorphisms are bi-bounded with respect to the appropriate bornologies, combining the isomorphisms we obtain our main result. 

\begin{rem}
Notice that for all $n\in \Z$ there is an automorphism $S_n$ of $\T$ preserving $\exp$ and infinite sums and sending $\x$ to $\x_n= \exp_n(\x)$ (see \cite{DriesMM2001}). The restriction of $S_n$ is an isomorphism $S_n| : (\T_n, +, \cdot, 0,1 ,<) \simeq (\T_0, +, \cdot, 0,1 ,<)$. 	
\end{rem}
 
\begin{prop}\label{Prop:LaurentIso}
There is a bi-bounded isomorphisms of ordered $\R$-vector spaces $$F: \T_0[t^{\pm 1}]\to \T^\uparrow.$$
where $\T_0[t^{\pm1}]$ is the ordered ring of Laurent polynomials with coefficients from $\T_0$ ordered with the condition $t>\T_0$ and is endowed with the bornology defined in Proposition~\ref{defn:bornologies}. 
\end{prop}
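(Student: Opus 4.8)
The plan is to build $F$ diagonally from the isomorphisms $f_n\colon\T_n\to\J_n$ of Proposition~\ref{PureIso} together with the shift automorphisms of $\T$. For each $n\in\Z$ let $\sigma_n\colon\T_0\to\T_n$ be the shift isomorphism of ordered fields (with $\sigma_0=\mathrm{id}$); it fixes $\R$, so it is an isomorphism of ordered $\R$-vector spaces, and it is bi-bounded for $\mathcal{T}|_{\T_0}$ and $\mathcal{T}|_{\T_n}$ because it carries each generating subfield $\T_{j,\x_{-l}}$ to one of the form $\T_{j,\x_m}$ ($m\in\Z$), which is $\mathcal{T}$-bounded --- directly a generator when $m\le 0$, and contained in $\T_{j+m,\x}$ when $m>0$, using $\T_{a,\exp(\lambda)}\subseteq\T_{a+1,\lambda}$. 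Composing, $\psi_n:=f_n\circ\sigma_n\colon\T_0\to\J_n$ is, for every $n$, a bi-bounded isomorphism of ordered $\R$-vector spaces, and then $F\colon\T_0[t^{\pm1}]\to\T^\uparrow$ is defined by $F\bigl(\sum_i a_i t^i\bigr)=\sum_i\psi_i(a_i)$, where the sum on the left is finite and the right-hand side uses the decomposition $\T^\uparrow=\bigoplus_{n\in\Z}\J_n$ of Proposition~\ref{Dec&Char} (equivalently, $F$ sends $\T_0\,t^n$ onto $\J_n$ via $\psi_n$). Since each $\psi_i$ is an $\R$-linear bijection $\T_0\to\J_i$, $F$ is a well-defined $\R$-linear bijection, and it remains to check that it preserves the order and that it is bi-bounded.

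For the order: the ordering of $\T_0[t^{\pm1}]$ determined by $t>\T_0$ is lexicographic, a non-zero Laurent polynomial having the sign of its top-degree coefficient (because $t$, hence each $t^m$, dominates all lower powers over $\T_0$). On the other side, from $\J_{n+1}^{>0}>\J_n$ (Proposition~\ref{Dec&Char}) together with Remark~\ref{rem:convex}(3) one gets by induction that a non-zero element of $\bigoplus_n\J_n$ has the sign of its component in the highest $\J_n$ on which it does not vanish. As $F$ maps the degree filtration $\bigoplus_{i\le n}\T_0\,t^i$ onto the level filtration $\bigoplus_{i\le n}\J_i=\T_n^\uparrow$ and induces $\psi_n$ on the $n$-th graded piece, $F$ and $F^{-1}$ preserve signs, hence the order.

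For bi-boundedness, both bornologies are generated by upward directed families, so it is enough to send generators to bounded sets. On a generator $E[t^{[m,n]}]$ --- with $E\subseteq\T_0$ a $\mathcal{T}$-bounded subfield --- the image is the finite sum $\sum_{i=m}^n\psi_i(E)$ of $\mathcal{T}|_{\J_i}$-bounded, hence $\mathcal{T}|_{\T^\uparrow}$-bounded, subsets; by directedness these lie in a single $\T_{j',\x_{-k'}}^\uparrow$, a subspace, so their sum is bounded. For the inverse, consider a generator $\T_{j,\x_{-k}}^\uparrow$ of $\mathcal{T}|_{\T^\uparrow}$. The crucial point is how the subfields $\T_{j,\x_{-k}}$ sit inside the level decomposition: by Proposition~\ref{prop:levels} and the normal-form descriptions of Remark~\ref{rem:forms}, the component of $\T_{j,\x_{-k}}^\uparrow$ in $\J_i$ equals $\J_{i+k,\x_{-k}}$, which vanishes for all but finitely many $i$ (precisely $-k\le i\le j-k$) and is one of the generators of $\mathcal{T}|_{\J_i}$. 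Applying the bi-bounded $\psi_i^{-1}$ to these finitely many summands gives $F^{-1}(\T_{j,\x_{-k}}^\uparrow)=\sum_{i=-k}^{j-k}t^i\,\psi_i^{-1}(\J_{i+k,\x_{-k}})$ with each $\psi_i^{-1}(\J_{i+k,\x_{-k}})$ a $\mathcal{T}$-bounded subset of $\T_0$; enlarging all of them to a single generating subfield $E\subseteq\T_0$ (directedness again), one gets $F^{-1}(\T_{j,\x_{-k}}^\uparrow)\subseteq E[t^{[-k,\,j-k]}]$, a generator. Hence $F^{-1}$ is bounded and $F$ is bi-bounded.

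I expect the main obstacle to be this last step, and precisely the identification of the $\J_i$-component of $\T_{j,\x_{-k}}^\uparrow$ with the bounded subspace $\J_{i+k,\x_{-k}}$ of $\J_i$: this is the bridge that makes the Laurent-polynomial bornology correspond to $\mathcal{T}|_{\T^\uparrow}$, and carrying it out cleanly requires combining the level computations of Proposition~\ref{prop:levels} with the iterated-Hahn normal forms of Remark~\ref{rem:forms}. By contrast, checking that the shift automorphisms respect $\mathcal{T}$ is routine, though it needs some care with the index shifts.
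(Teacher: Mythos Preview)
Your proof is correct and follows essentially the same route as the paper: define $F$ degree-wise as $f_n\circ\sigma_n$ (the paper writes $f_n\circ S_n$), use Proposition~\ref{Dec&Char} for the order, and verify bi-boundedness by tracing the generating subgroups of the two bornologies through $F$. Your bookkeeping for the last step (identifying the $\J_i$-component of $\T_{j,\x_{-k}}^\uparrow$ with $\J_{i+k,\x_{-k}}$ via Remark~\ref{rem:forms}) is in fact a bit more explicit than the paper's, which instead computes $F(\T_{n,\x_{-n}}[t^{[-m,m]}])$ directly and then sandwiches $\T_{n,\x_{-k}}^\uparrow$ between two such images.
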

\begin{pf} It suffices to compose the isomorphisms 
	$$	 
	\T_0[t^{\pm1}]
	\simeq 
	\bigoplus_{n \in\Z} \T_0 \simeq
	\bigoplus_{n \in\Z} \T_n \simeq
	\bigoplus_{n \in\Z} 
	\J_n
	\simeq
	\T^\uparrow
	$$
	More precisely, 
	let $f_n:\T_n \to \J_n$ be as in Proposition~\ref{PureIso}, then one defines
	$$F: \T_0[t^{\pm1}] \rightarrow \T^\uparrow, \qquad F(\sum k_i t^i)= \sum f_n \circ S_n(k_n).$$
	This is an ordered isomorphism by virtue of Proposition~\ref{Dec&Char}.
	
	Notice that $F(t^i \T_{n,\x_{-n-i}}) = \J_{n,\x_{-n+i}}$, hence by linearity
	$$F(\T_{n,\x_{-n}}[t^{[-m,m}]) = \sum_{i\in [-m,m]} \J_{n,\x_{-n+i}}.$$
	%
	%
	To prove that $F$ is bi-bounded it suffices to show that,
	as $m\leq n$ range in $\Z$, the sets $\T_{n,\x_{-n}}[t^{[-m,m}]$ and $\sum_{i\in [-m,m]} \J_{n,\x_{-n+i}}$ generate the bornologies of $\T_0[t^{\pm 1}]$ and $\T^\uparrow$ respectively. This is clear for $\T_0[t^{\pm 1}]$. Now recall that the bornology of $\T^\uparrow$ is generated by the sets $\T^\uparrow_{n,\x_{-k}}$ and, by Remark~\ref{rem:forms}, Equation~\ref{rem:forms:eqn1}, 
	for $n \in \N$ and $k\in \Z$, setting $m=\max\{|n-k|, |k|\}$, we get
	$$\T_{n, \x_{-k}}^\uparrow \subseteq \J_{n, \x_{-n-m}} + \cdots + \J_{n, \x_{-n+m}} \subseteq \T_{n+2m, \x_{-m-n}}^\uparrow.$$
	
\end{pf}

\begin{prop}\label{Laurent} 
	Given an ordered field $K$ and a bornology on $\mathcal{K}$ on $K$ generated by subfields, consider the
	bornology $\mathcal{K}[t]$ generated by subgroups of the form $E[t^{[m,n]}]=Et^m + \cdots +Et^n$ as $E$ ranges in the $\mathcal{K}$-bounded subfields of $K$ and $m\le n$ range in $\Z$. 
	There are: 
	\begin{enumerate}
		\item a bi-bounded order isomorphism $h: K\simeq K^{>0}$
		\item a bi-bounded order isomorphism $\mathcal{H}: K[t^{\pm 1}] \simeq K[t^{\pm 1}]^{>0}$ extending $h$ where $K[t^{\pm 1}]$ is the additive group of all Laurent polynomials, with the ring order induced by $t>K$.
	\end{enumerate}
	\begin{proof}
		
		(1) An order isomorphism can be defined piecewise, e.g. setting
		\[h(x)= \begin{cases}
		x+1 &\text{if}\; x\ge 0\\
		\frac{1}{1-x} &\text{if}\; x \le 0.
		\end{cases}
		\]
		One easily sees that since $h$ is defined only in terms of the order and of field operations and constants, for every subfield $L \subseteq K$ it restricts to $h|: L \simeq L^{>0}$, hence it is bi-bounded. 
		\medskip
		
		(2) This is a bit more involved. 
		We will define order isomorphisms $A$ and $B$ as in the diagram below
		\begin{center}
			$\begin{tikzcd}
			K[t^{\pm 1}] \arrow[r, "A"] \arrow[dr, "\mathcal{H}"]
			& \Z \overset{>}\times K[t^{-1}]    \\
			&K[t^{\pm 1}]^{>0} \arrow[u, "B"]\end{tikzcd}$
		\end{center}
		and define $\mathcal{H}$ as the composition $\mathcal{H}:= B^{-1} \circ A$.
		Here $\mathbb{Z} \overset{>}{\times} K[t^{-1}]$ denotes the product of $\Z$ and $K[t^{-1}]$ endowed with the lexicographic total order, that is $(x,y) < (x', y')$ if and only if $x<x'$ or $x=x'$ and $y<y'$.
	
		In order to define the isomorphisms $A$ let us observe that the set $t^{n}K[t^{-1}]$ of Laurent polynomials of degree $\leq n$ can be partitioned into three order-convex subsets $$t^{n}K[t^{-1}] \;=\; L_n \;\cup\;  t^{n-1} K[t^{-1}] \;\cup\; U_n$$
		with
		$$L_n\;<\; t^{n-1} K[t^{-1}] \;<\; U_n$$
		where $L_n$ is the set of negative Laurent polynomials of degree $n$ and $U_n$ is the set of positive Laurent polynomials of degree $n$, that is: 
		\begin{align*}
		&L_n:= t^{n-1} K[t^{-1}] + K^{<0} t^{n} \\
		&U_n:= t^{n-1} K[t^{-1}] + K^{>0} t^{n}. 
		\end{align*}
		
		It follows that we can write $K[t^{\pm1}]$ as the disjoint union
		$$ K[t^{\pm1}] = \bigcup_{n >0} L_n \cup K[t^{-1}] \cup \bigcup_{n >0} U_n.$$
 		By point (1) we have order isomorphisms $K \simeq K^{>0}\simeq K^{<0}$. It follows that we can write down induced order isomorphisms 
		\begin{align*}
		u_n : U_n &\rightarrow \{n\} \times K[t^{-1}], \qquad &&y+xt^{n} &\mapsto&\; \big(n, t^{-n} y +h^{-1}(x) \big)\\
		l_{n} : L_n &\rightarrow \{-n\} \times K[t^{-1}], \qquad &&y -xt^{n} &\mapsto&\; \big(-n, t^{-n} y-h^{-1}(x) \big)
		\end{align*}
		where $y$ ranges in $t^{n-1}K[t^{-1}]$ and $x$ in $K^{>0}$.
		
		Thus we can define an order isomorphism $A: K[t^{\pm1}] \rightarrow \Z \overset{>}{\times} K[t^{-1}]$ as the union
		$$A= \bigcup_{n>0} l_n \cup A_0 \cup \bigcup_{n>0} u_n : K[t^{\pm1}] \rightarrow \mathbb{Z} \overset{>}{\times} K[t^{-1}]$$
		where $A_0: K[t^{-1}] \simeq  \{0\} \times K[t^{-1}]$ is the obvious isomorphism $y \mapsto (0 ,y)$.
		
		\smallskip
		Similarly $K[t^{\pm1}]^{>0}$ decomposes as a disjoint union
		\[K[t^{\pm1}]^{>0} = \bigcup_{n \in \Z} U_n \qquad \text{with}\qquad U_n<U_{n+1}, \]
		and again we can define the order isomorphism $B : K[t^{\pm1}]^{>0} \rightarrow \Z \overset{>}{\times} K[t^{-1}]$ as the union for $n \in \Z$ of $u_n: U_n \simeq \{n\} \times K[t^{-1}]$:
		\[B = \bigcup_{n \in \Z} u_n : K[t^{\pm1}]^{>0} \rightarrow \mathbb{Z} \overset{>}{\times} K[t^{-1}].\]

		In order to prove that $\mathcal{H}: B^{-1}\circ A: K[t^{\pm1}] \to K[t^{\pm1}]^{>0}$ is bi-bounded it suffices to prove that for any subfield $E$ of $K$ we have
		$$E[t^{[-n,n]}]^{>0} \subseteq \mathcal{H} (E[t^{[-n,n]}]) \subseteq E[t^{[-2n, 2n]}]^{>0}.$$
		
		Computing $A$ and $B$ on $E[t^{[-n,n]}]$, where $E$ is a subfield of $K$, we have
		$$ A(E[t^{[-n,n]}]) =
		\bigcup_{|k| \le n} \{k\} \times E[t^{[-|k|-n,0]}],$$
		$$ B(E[t^{[-n,n]}]) =
		\bigcup_{|k| \le n} \{k\} \times E[t^{[-k-n,0]}].
		$$
		From this we see that clearly $B(E[t^{[-n,n]}]) \subseteq A( E[t^{[-n,n]}])  \subseteq B(E[t^{[-2n,2n]}])$. The claim follows applying $B^{-1}$. 
	\end{proof}
\end{prop}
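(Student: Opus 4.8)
For part (1) the plan is to exhibit an explicit piecewise order isomorphism: set $h(x)=x+1$ for $x\ge 0$ and $h(x)=1/(1-x)$ for $x\le 0$. The two clauses agree at $0$, and $h$ is strictly increasing from $(-\infty,0]$ onto $(0,1]$ and from $[0,\infty)$ onto $[1,\infty)$, so $h\colon K\simeq K^{>0}$. The key point for boundedness is that $h$ and $h^{-1}$ are given by quantifier-free formulas over $0,1,+,\cdot,{<}$, hence restrict to mutually inverse bijections $L\simeq L^{>0}$ for every subfield $L\le K$; since every $\mathcal{K}$-bounded set lies in a finite union of bounded subfields, this makes $h$ bi-bounded.

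For part (2) I would realize both $K[t^{\pm 1}]$ and its positive cone as $\Z$-indexed lexicographic sums of copies of $K[t^{-1}]$ and match them block by block. For $n\ge 1$ put $U_n=t^{n-1}K[t^{-1}]+K^{>0}t^n$ (the positive Laurent polynomials of degree $n$) and $L_n=t^{n-1}K[t^{-1}]+K^{<0}t^n$; then $t^nK[t^{-1}]$ decomposes as the convex disjoint union $L_n<t^{n-1}K[t^{-1}]<U_n$, and iterating gives
\[
K[t^{\pm 1}]=\Bigl(\textstyle\bigsqcup_{n>0}L_n\Bigr)\sqcup K[t^{-1}]\sqcup\Bigl(\textstyle\bigsqcup_{n>0}U_n\Bigr),\qquad \dots<L_2<L_1<K[t^{-1}]<U_1<U_2<\dots,
\]
while $K[t^{\pm 1}]^{>0}=\bigsqcup_{n\in\Z}U_n$ with $U_n<U_{n+1}$. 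Using $h^{-1}\colon K^{>0}\simeq K$ one writes order isomorphisms $u_n\colon U_n\simeq K[t^{-1}]$, $\,y+xt^n\mapsto t^{-n}y+h^{-1}(x)$, and $l_n\colon L_n\simeq K[t^{-1}]$, $\,y-xt^n\mapsto t^{-n}y-h^{-1}(x)$ (here $y\in t^{n-1}K[t^{-1}]$ and $x\in K^{>0}$), which glue along the two displayed block orders into order isomorphisms $A\colon K[t^{\pm 1}]\simeq\Z\overset{>}{\times}K[t^{-1}]$ and $B\colon K[t^{\pm 1}]^{>0}\simeq\Z\overset{>}{\times}K[t^{-1}]$, where $\Z\overset{>}{\times}X$ is the lexicographic product with $\Z$ as leading coordinate. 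Then I would set $\mathcal{H}:=B^{-1}\circ A$.

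Finally I would check that $\mathcal{H}$ extends $h$ and is bi-bounded. A constant $c\in K$ sits in the block $K[t^{-1}]$, so $A(c)=(0,c)$, and $B^{-1}$ restricted to $\{0\}\times K[t^{-1}]$ is $u_0^{-1}$, which sends $(0,c)$ to $h(c)$ because $c$ has no negative-degree part; hence $\mathcal{H}$ extends $h$. For bi-boundedness it suffices to control the generators $E[t^{[-n,n]}]$ ($E$ a bounded subfield, $n\in\N$), which are cofinal among all generators $E[t^{[m,n]}]$. A direct computation — using that $h,h^{-1}$ keep the coefficients inside $E$ and that multiplication by $t^{-k}$ shifts the degree window — would give
\[
A(E[t^{[-n,n]}])=\bigcup_{|k|\le n}\{k\}\times E[t^{[-|k|-n,0]}],\qquad B(E[t^{[-n,n]}]^{>0})=\bigcup_{|k|\le n}\{k\}\times E[t^{[-k-n,0]}],
\]
and comparing windows block by block would yield $B(E[t^{[-n,n]}]^{>0})\subseteq A(E[t^{[-n,n]}])\subseteq B(E[t^{[-3n,3n]}]^{>0})$ (the precise constant is immaterial). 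Applying $B^{-1}$ then gives $E[t^{[-n,n]}]^{>0}\subseteq\mathcal{H}(E[t^{[-n,n]}])\subseteq E[t^{[-3n,3n]}]^{>0}$, so $\mathcal{H}$ and $\mathcal{H}^{-1}$ map generators into generators, hence are bounded.

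I expect the routine parts to be (1) and the computation of $\mathcal{H}$ on constants. The hard part will be the bookkeeping in (2): first, setting up the two convex block decompositions correctly and checking that $A$ and $B$ are genuine order isomorphisms onto $\Z\overset{>}{\times}K[t^{-1}]$ — the subtle point being that negative-leading Laurent polynomials of \emph{large} degree are \emph{small}, so they correspond to \emph{negative} indices; and second, tracking how the substitution $t^{-k}$ inside $u_k$ and $l_k$ widens the degree window, so that the $\mathcal{H}$-image of a box $E[t^{[-n,n]}]$ remains inside a box over the same coefficient field. This last point is exactly what yields bi-boundedness.
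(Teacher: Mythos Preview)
Your proposal is correct and follows essentially the same route as the paper: the same piecewise $h$, the same block decompositions into $U_n$, $L_n$, the same maps $u_n$, $l_n$ glued into $A$ and $B$, and $\mathcal{H}=B^{-1}\circ A$. Your treatment is in fact slightly more careful than the paper's: you explicitly verify that $\mathcal{H}$ extends $h$ (the paper asserts this in the statement but omits the check), and your bound $3n$ is the right one whereas the paper's claimed $2n$ is a minor slip (for $k$ negative with $|k|>n/2$ the window $E[t^{[-|k|-n,0]}]$ does not fit inside $E[t^{[-k-2n,0]}]$), though as you say the precise constant is immaterial.
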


\begin{defn}\label{def:Eta} 
	By Proposition~\ref{Laurent} point (2), setting $K= \T_0$, $\mathcal{K}= \mathcal{T}|_{\T_0}$, we get an order isomorphism  $$\mathcal H : \T_{0}[t^{\pm1}] \rightarrow \T_{0}[t^{\pm1}]^{>0}$$
	which is bi-bounded with respect to the bornology of Definition~\ref{defn:bornologies}.
\end{defn}
 \begin{defn}
 	Let $F: \T_{0}[t^{\pm1}] \rightarrow \T^{\uparrow}$ be the ordered $\R$-vector space isomorphism described in Proposition~\ref{Prop:LaurentIso}. We define $\overline{\mathcal{H}}$ and $\eta$ as follows:
 	$$\begin{aligned}
 	\overline{\mathcal{H}} = F \circ \mathcal{H} \circ F^{-1} &: \T^{\uparrow} \rightarrow \T^{\uparrow, >0}\\
 	\eta=\exp \circ \overline{\mathcal{H}} \circ  \log &: \M^{LE} \rightarrow \M^{LE, \succ 1}.
 	\end{aligned}$$
 \end{defn}

\begin{lemma}\label{Important1} The map $$\eta: (\M^{LE}, <) \simeq (\M^{LE, \succ 1}, <)$$ is a bi-bounded order isomorphism. 
\end{lemma}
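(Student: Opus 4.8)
The plan is to assemble $\eta$ out of pieces that have already been shown to be bi-bounded order isomorphisms, so that the conclusion follows by composition. Recall that $\eta = \exp \circ \overline{\mathcal H} \circ \log$ with $\overline{\mathcal H} = F \circ \mathcal H \circ F^{-1}$. The four maps involved are: $\log\colon \M^{LE} \to \T^\uparrow$, then $F^{-1}\colon \T^\uparrow \to \T_0[t^{\pm1}]$, then $\mathcal H\colon \T_0[t^{\pm1}] \to \T_0[t^{\pm1}]^{>0}$, then $F\colon \T_0[t^{\pm1}]^{>0}\to \T^{\uparrow,>0}$, and finally $\exp\colon \T^{\uparrow,>0}\to \M^{LE,\succ 1}$. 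Each of these is an order isomorphism onto the indicated image, so the composite is an order isomorphism $\M^{LE}\to \M^{LE,\succ 1}$; the only real content is checking that each is bi-bounded and that the bornologies match up at the seams.

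First I would record that $\exp$ and $\log$ are mutually inverse bi-bounded order isomorphisms between $(\M^{LE},<)$ and $(\T^\uparrow,>0)$ (the order-isomorphism part is the $\exp$ fact from Section~3, and bi-boundedness is Remark~\ref{ExpBounded}, noting $\mathcal M = \mathcal T|_{\M^{LE}}$). More precisely $\exp$ restricts to an order isomorphism $\T^\uparrow \simeq \M^{LE}$ which sends the positive cone $\T^{\uparrow,>0}$ to the cone $\M^{LE,\succ 1}$ of infinite monomials, so $\exp\colon \T^{\uparrow,>0}\simeq \M^{LE,\succ1}$ and $\log$ is its inverse. Next, $F\colon \T_0[t^{\pm1}]\to \T^\uparrow$ is a bi-bounded isomorphism of ordered $\R$-vector spaces by Proposition~\ref{Prop:LaurentIso}; being an order isomorphism of ordered groups it carries positive cone to positive cone, so it also restricts to a bi-bounded order isomorphism $\T_0[t^{\pm1}]^{>0}\simeq \T^{\uparrow,>0}$, and $F^{-1}$ is bi-bounded as well. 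Finally $\mathcal H\colon \T_0[t^{\pm1}]\simeq \T_0[t^{\pm1}]^{>0}$ is the bi-bounded order isomorphism furnished by Proposition~\ref{Laurent}(2) with $K=\T_0$ and $\mathcal K = \mathcal T|_{\T_0}$, as recorded in Definition~\ref{def:Eta}; here one must use that the bornology $\mathcal T|_{\T_0}$ on $\T_0$ is generated by subfields, namely the subfields $\T_{n,\x_{-k}}\cap \T_0 = \T_{n,\x_n}$ (so that Proposition~\ref{Laurent} genuinely applies and produces a map bi-bounded for the bornology of Definition~\ref{defn:bornologies}(3)).

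Then I would conclude: $\overline{\mathcal H} = F\circ \mathcal H \circ F^{-1}$ is a composite of three bi-bounded order isomorphisms
\[
\T^\uparrow \xrightarrow{\;F^{-1}\;} \T_0[t^{\pm1}] \xrightarrow{\;\mathcal H\;} \T_0[t^{\pm1}]^{>0} \xrightarrow{\;F\;} \T^{\uparrow,>0},
\]
hence itself a bi-bounded order isomorphism $\T^\uparrow \simeq \T^{\uparrow,>0}$; composing with $\log$ on the right and $\exp$ on the left,
\[
\M^{LE} \xrightarrow{\;\log\;} \T^\uparrow \xrightarrow{\;\overline{\mathcal H}\;} \T^{\uparrow,>0} \xrightarrow{\;\exp\;} \M^{LE,\succ1},
\]
gives that $\eta$ is a bi-bounded order isomorphism $(\M^{LE},<)\simeq(\M^{LE,\succ1},<)$, as claimed.

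The proof is essentially a bookkeeping exercise: the one point that needs genuine care, rather than invocation of an earlier result, is the compatibility of the bornologies at each composition — in particular verifying that the bornology on $\T_0$ inherited from $\mathcal T$ is generated by subfields (so that Proposition~\ref{Laurent} is applicable) and that the bornology Proposition~\ref{Laurent} produces on $\T_0[t^{\pm1}]$ coincides with the one used in Proposition~\ref{Prop:LaurentIso} (Definition~\ref{defn:bornologies}(3)). Once these identifications are made explicit, composition of bi-bounded order isomorphisms is immediate, and there is no further obstacle.
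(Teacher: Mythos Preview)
Your proposal is correct and follows exactly the same approach as the paper's proof: both argue that $\eta$ is a composite of the bi-bounded order isomorphisms $\exp$, $F$, $\mathcal H$ (and their inverses), citing Remark~\ref{ExpBounded}, Proposition~\ref{Prop:LaurentIso}, and Proposition~\ref{Laurent}. Your write-up is simply more explicit about the bornological bookkeeping at each seam, which the paper compresses into a single sentence.
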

\begin{proof}
	We know that all the maps $\exp, \mathcal{H}, F$ are bi-bounded order isomorphisms hence $\eta$, being a composition of them and their inverses has to be a bi-bounded isomorphism (see Remark~\ref{ExpBounded}, Proposition~\ref{Prop:LaurentIso} and Proposition~\ref{Laurent}).
\end{proof}	

\begin{thm}\label{thm:Main}
	The ordered group of transserial monomials $\M^{LE}$ is isomorphic to the ordered additive reduct of $\T$.
\end{thm}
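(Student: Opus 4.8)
The plan is to deduce the theorem from Lemma~\ref{Important1}. First I would observe that, by the Fact recalled above, $\exp$ restricts to an ordered group isomorphism $(\T^{\uparrow},+,0,<)\simeq(\M^{LE},\cdot,1,<)$, so it suffices to build an $\R$-linear order isomorphism $\Omega\colon(\T,+,<)\simeq(\T^{\uparrow},+,<)$; the composite $\exp\circ\Omega$ is then an omega-map $(\T,+,0,<)\simeq(\M^{LE},\cdot,1,<)$, which in particular exhibits the additive reduct of $\T$ as isomorphic, as an ordered group, to the group of transserial monomials. I would obtain $\Omega$ by promoting the bi-bounded order isomorphism $\eta\colon\M^{LE}\simeq\M^{LE,\succ 1}$ of Lemma~\ref{Important1} so that it acts monomial-wise on Hahn series: for $f\in\T$, set
\[\Omega(f):=\sum_{\m\in\supp(f)}f_{\m}\,\eta(\m).\]

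The rest is a routine verification with the summability notions of Section~\ref{sec:Summability}. To see that $\Omega$ is well defined with values in $\T^{\uparrow}$: being an order isomorphism, $\eta$ sends the reverse-well-ordered set $\supp(f)$ onto a reverse-well-ordered subset of $\M^{LE,\succ 1}$, so the family $(f_{\m}\eta(\m))_{\m\in\supp(f)}$ is summable; being bounded, $\eta$ carries the $\mathcal{M}$-bounded set $\supp(f)$ into some generator $\fN_{n,\x_{-k}}$ of $\mathcal{M}$, whence $\Omega(f)\in\R\Lhp\fN_{n,\x_{-k}}\Rhp=\T_{n,\x_{-k}}\subseteq\T$, and since the monomials $\eta(\m)$ are all infinite the sum lies in $\R\Lhp\M^{LE,\succ 1}\Rhp\cap\T=\T^{\uparrow}$. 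To see that $\Omega$ is bijective: the same construction applied to the bi-bounded order isomorphism $\eta^{-1}$ yields a well-defined $\Psi\colon\T^{\uparrow}\to\T$, $\Psi(g)=\sum_{\n\in\supp(g)}g_{\n}\,\eta^{-1}(\n)$, and $\Psi$ and $\Omega$ are mutually inverse by construction. Additivity and $\R$-linearity of $\Omega$ follow from the injectivity of $\eta$ (the monomials $\eta(\m)$ are pairwise distinct, so coefficients at $\eta(\m)$ simply add), and order preservation follows from the monotonicity of $\eta$, which gives $\lm(\Omega(f))=\eta(\lm(f))$ and $\lc(\Omega(f))=\lc(f)$, so that $f>0\iff\lc(f)>0\iff\Omega(f)>0$; together with linearity this upgrades to $f>g\iff\Omega(f)>\Omega(g)$.

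I do not expect a serious obstacle at this stage: essentially all the difficulty has been spent constructing $\eta$ in Lemma~\ref{Important1} and building the bornological machinery that feeds it. The only steps that are not purely formal are the two appeals to boundedness --- that $\eta$ and $\eta^{-1}$ send $\mathcal{M}$-bounded reverse-well-ordered sets to $\mathcal{M}$-bounded reverse-well-ordered sets --- and these are precisely what guarantee that the formal sums defining $\Omega(f)$ and $\Psi(g)$ converge inside $\T$, and not merely inside the ambient Hahn field $\R\Lhp\M^{LE}\Rhp$.
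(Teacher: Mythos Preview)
Your proposal is correct and follows essentially the same approach as the paper: define a strongly $\R$-linear map $\T\to\T^{\uparrow}$ by pushing monomials through $\eta$, use the bi-boundedness of $\eta$ (Lemma~\ref{Important1}) to ensure this lands in $\T^{\uparrow}$ and has a well-defined inverse, and then compose with $\exp|$. The paper's proof is terser---it names your $\Omega$ as $H$ and simply asserts well-definedness and invertibility from boundedness---while you spell out the verifications more carefully, but the content is the same.
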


\begin{proof}
	Consider the isomorphism of ordered $\R$-vectors spaces 
	 $$H: \T \rightarrow \T^\uparrow$$ defined by 
	$H(\sum r_i \m_i )= \sum r_i \eta(\m_i)$.  Note that $H$ is well defined because $\eta$ is bounded and it is an isomorphism because its inverse is bounded. Now define $$\Omega: (\T, 0,+, <) \rightarrow (\M^{LE}, 1, \cdot, <)$$ as the composition 
	$\Omega=\exp \circ H$. Then $\Omega$ is the desired isomorphism. 
\end{proof}

\section{Generalizing a result on omega-maps}
In \cite{Berarducci2018b} it is shown that a field of the form $\R((\M))_{<\kappa}$ admits an omega-map if and only if admits an exponential and $\M$ is order isomorphic to $\M^{>1}$. We generalize this result to the case when instead of $\R((\M))_{<\kappa}$ we have a subfield of $\R((\M))$ induced by a bornology on $\M$. 

\begin{defn}\label{defn:BField}
	Given an ordered group $(\M, \cdot, 1, <)$, if $\Gamma$ is a subset of $\M$ and $\mathcal{G}$ is a bornology on $\Gamma$ we define 
	\[\R \Lhp \Gamma_{\mathcal{G}} \Rhp := \bigcup_{S \in \mathcal{G}} \R \Lhp S\Rhp, \]
	that is, the subspace of $\R\Lhp \Gamma \Rhp$ consisting of well founded sums with $\mathcal{G}$-bounded support.
\end{defn}

\begin{rem}\label{Subgroups&Exp}
	Let $(\M,\cdot, 1, <)$ be an ordered abelian group and let $\mathcal{M}$ be a bornology generated by subgroups. Then:
	\begin{itemize}
		\item $\R\Lhp \M_{\mathcal{M}} \Rhp \subseteq \R \Lhp \M \Rhp$ is a subfield (as it is a directed union of fields).
		\item if $\varepsilon \in \R\Lhp \M_\mathcal{M} \Rhp $ is infinitesimal and $(k_n)_{n \in \mathbb{N}}$ is a $\mathbb{N}$-sequence in $\R$, then
		$$\sum_{n \in \mathbb{N}} k_n \varepsilon ^n \in \R\Lhp \M_\mathcal{M}\Rhp.$$
	\end{itemize}
	It follows in particular that in order for fields of the form $\R\Lhp \M_{\mathcal{M}} \Rhp$ to have an exponential it suffices that they have one restricted to purely infinite elements, the extension being constructed as in Equation~\ref{exp}. 
\end{rem}

\begin{example}
	Let $(\M, \cdot, 1, <)$ be a multiplicatively written ordered abelian group.
	\begin{enumerate}
		\item If $\kappa$ is an uncountable regular cardinal, the family $\mathcal{M}_\kappa$ of subsets of $\M$ having cardinality strictly less than $\kappa$ is a bornology which can be generated by subgroups. The field $\R\Lhp \M_{\mathcal{M}_{\kappa}} \Rhp$ is the \emph{field of $\kappa$-bounded Hahn-series} and is also denoted by $\R\Lhp \M \Rhp_{\kappa}$.
		\item The family $\grid$ of subsets contained in some finitely generated subgroups of $\mathfrak{M}$ is the smallest bornology on $\M$ generated by subgroup.
		The field $\R\Lhp \mathfrak{M}_\grid \Rhp$ is called \emph{field of grid based} series (cfr \cite{VanderHoeven2006}).
		\item Taking $\M= \x^{\Q}$ we obtain the field of Puiseux series $\R\Lhp x^\Q_\grid \Rhp$. 
		\item The field $\T$ of LE-transseries coincides with $\R\Lhp\M^{LE}_\mathcal{M}\Rhp$ where $\mathcal{M}$ is the bornology in Definition \ref{defn:bornologies}. 
		
	\end{enumerate}
\end{example}

\begin{rem}\label{BMonInduced}
	If $f:\Gamma_\mathcal{G} \rightarrow \Delta_{\mathcal{D}}$ is a bounded increasing map between total orders, then the natural induced map $F: \R \Lhp \Gamma\Rhp \rightarrow \R \Lhp \Delta \Rhp$ defined as
	\[F \sum_{i < \alpha} k_i \gamma_i = \sum_{i <\alpha} k_i f(\gamma_i)\]
	maps $\R \Lhp \Gamma_{\mathcal{G}} \Rhp$ into $\R \Lhp \Delta_{\mathcal{D}} \Rhp$.
\end{rem}

The following results generalizes Theorem~4.1 of \cite{Berarducci2018b}.

\begin{prop}\label{Omega&Exp}
	Let $\fN$ be a multiplicatively written ordered abelian group and let $\mathcal{N}$ be a bornology generated by subgroups of $\fN$. For the field $\K= \R\Lhp \fN_{\mathcal{N}} \Rhp$, denote by $\mathcal{K}$ the bornology on $\K$ generated by the subfields of the form $\R\Lhp \fN'\Rhp$ as $\fN'$ ranges in the $\mathcal{N}$-bounded subgroups of $\fN$, then the following are equivalent:
	\begin{enumerate}
		\item there is an ordered bi-bounded isomorphism $(\K, +, 0, <, \mathcal{K}) \simeq (\mathfrak{N}, \cdot, 1 ,<, \mathcal{N})$ between the field and its group of values $v(\K)=\mathfrak{N}$; 
		\item $\K$ admits a bi-bounded isomorphism $\exp| : (\K^\uparrow,+, 0,<, \mathcal{K}|) \simeq (\mathfrak{N}, \cdot, 1, <, \mathcal{N})$ and an ordered bi-bounded map $(\mathfrak{N}, \mathcal{N}, < ) \simeq (\mathfrak{N}^{\succ 1}, \mathcal{N}|, <)$
	\end{enumerate}
	where $\mathcal{N}|$ and $\mathcal{K}|$ denote the restrictions of the bornologies $\mathcal{N}$ and $\mathcal{K}$ to $\fN^{\succ 1}$ and $\K^\uparrow$ respectively.
	\begin{proof}
		Assume (1) holds, that is we have an isomorphism $\Omega : (\K, +, 0, <, \mathcal{K}) \simeq (\mathfrak{N}, \cdot, 1 ,<, \mathcal{N})$ and let $h: \K \rightarrow \K^{>0}$ be defined by the formula~\ref{hIso} (Proof of Proposition~\ref{PureIso}): it is easy to check that then 
		\[\Omega \circ h \circ  \Omega^{-1} : (\mathfrak{N}, \cdot, 1, <, \mathcal{N}) \simeq  (\mathfrak{N}^{\succ 1}, \cdot, 1, <, \mathcal{N}|)\]
		is a bi-bounded chain isomorphism, hence it allows us to define an isomorphism $G: (\K^{\uparrow}, +, 0, <) \rightarrow (\K, +, 0, <)$, as the only strongly $\R$-linear map that restricts to $\Omega \circ h^{-1} \circ \Omega ^{-1}: \mathfrak{N}^{\succ 1} \rightarrow \mathfrak{N}$ (see Remark~\ref{BMonInduced}). $G$ is then bi-bounded w.r.t. to $\mathcal{K}$ and $\mathcal{K}|$. The composite $\Omega \circ G$ is the sought exponential restricted to purely infinite elments $\Omega \circ G= \exp|: (\K^\uparrow,+, 0,<) \simeq (\mathfrak{N}, \cdot, 1, <)$: it is bi-bounded because it is a composition of bi-bounded isomorphisms.\\
		On the other hand assuming (2) if we have an isomorphism $\eta: (\mathfrak{N}, \mathcal{N}, < ) \simeq (\mathfrak{N}^{\succ 1}, \mathcal{N}|, <)$ we can immediately define an isomorphism $H: (\K, +, 0, <, \mathcal{K}) \rightarrow (\K^{\uparrow}, +, 0, <, \mathcal{K}|)$ as the only strongly $\R$-linear map restricting to $\eta$ and set $\Omega = \exp \circ H$: it is again bi-bounded because it is a composition of bi-bounded maps.
	\end{proof}
\end{prop}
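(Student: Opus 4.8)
The plan is to reduce both implications to two reusable ingredients already present in the paper. The first is the order isomorphism $h\colon \K \simeq \K^{>0}$ given by Equation~\ref{hIso}; since it is defined purely from the ordered-field operations it restricts to $h\colon L \simeq L^{>0}$ on every subfield $L$, and is therefore bi-bounded. The second is the principle of Remark~\ref{BMonInduced}: a bi-bounded order isomorphism between two sets of monomials induces, by strong $\R$-linearity, a bi-bounded isomorphism between the associated Hahn subfields (with its inverse induced by the inverse monomial map). Throughout I use that $v(\K)=\fN$ and the decomposition $\K = \K^\uparrow \oplus \R \oplus o(1)$, where $\K^\uparrow$ is the purely infinite part (series whose support lies in $\fN^{\succ 1}$); this is exactly what makes ``$\exp$ restricted to purely infinite elements'' the correct datum in (2), as noted in Remark~\ref{Subgroups&Exp}.

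For the implication (1)$\Rightarrow$(2) I would start from a bi-bounded omega-map $\Omega\colon (\K,+,0,<) \simeq (\fN,\cdot,1,<)$. Because $\Omega$ sends $0$ to $1$ and preserves the order, it carries $\K^{>0}$ exactly onto $\fN^{\succ 1}$. Conjugating $h$ then gives a bi-bounded order isomorphism $\Omega \circ h \circ \Omega^{-1}\colon (\fN,<) \simeq (\fN^{\succ 1},<)$, which is precisely the monomial isomorphism demanded by (2). For the exponential I would instead conjugate $h^{-1}$, producing a bi-bounded order isomorphism $\psi := \Omega \circ h^{-1} \circ \Omega^{-1}\colon \fN^{\succ 1} \simeq \fN$; by Remark~\ref{BMonInduced} this induces a strongly $\R$-linear bi-bounded isomorphism $G\colon \K^\uparrow \simeq \K$, and I set $\exp| := \Omega \circ G\colon \K^\uparrow \simeq \fN$, a composite of bi-bounded isomorphisms.

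For the converse (2)$\Rightarrow$(1), suppose I am given a bi-bounded $\exp|\colon \K^\uparrow \simeq \fN$ and a bi-bounded order isomorphism $\eta\colon \fN \simeq \fN^{\succ 1}$. Applying Remark~\ref{BMonInduced} directly to $\eta$ yields a strongly $\R$-linear bi-bounded isomorphism $H\colon \K \simeq \K^\uparrow$. I then define the omega-map as the composite $\Omega := \exp| \circ H\colon \K \simeq \fN$, which is a bi-bounded ordered isomorphism of $(\K,+,0,<)$ onto its value group, establishing (1).

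The routine part is bookkeeping, but the step I expect to be the genuine obstacle, and the one meriting care, is checking that every manufactured map respects the bornologies. Two points need verification. First, in (1)$\Rightarrow$(2) one must confirm that conjugation by $\Omega$ stays inside the bi-bounded category: this rests on $\Omega$ transporting $\mathcal{K}$ to $\mathcal{N}$ together with the fact that $\mathcal{K}$ is generated by the subfields $\R\Lhp \fN' \Rhp$ for $\mathcal{N}$-bounded subgroups $\fN'$, which is what couples boundedness on $\K$ with boundedness on $\fN$. Second, each use of Remark~\ref{BMonInduced} requires that the monomial map in play be bounded and increasing, so that it carries reverse-well-ordered bounded supports to reverse-well-ordered bounded supports and both the induced strongly linear map and its inverse are well defined and bi-bounded. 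Once these boundedness checks are settled, both directions close by the elementary remark that a composite of bi-bounded isomorphisms is again a bi-bounded isomorphism.
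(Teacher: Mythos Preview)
Your proposal is correct and follows essentially the same route as the paper: both directions use the piecewise map $h$ of Equation~\ref{hIso} conjugated by $\Omega$ to obtain the monomial isomorphism $\fN\simeq\fN^{\succ 1}$, promote the inverse monomial map to a strongly $\R$-linear $G:\K^\uparrow\to\K$ via Remark~\ref{BMonInduced}, and set $\exp|=\Omega\circ G$; conversely, $\eta$ is lifted to $H:\K\to\K^\uparrow$ and $\Omega=\exp|\circ H$. The additional boundedness discussion you flag is exactly the routine verification the paper leaves implicit.
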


\begin{rem}\label{UnrestrictedAnalticExp}
	By Remark~\ref{Subgroups&Exp}, the condition $(2)$ of Proposition~\ref{Omega&Exp} is equivalent to $\K$ admitting a surjective exponential \emph{which restricts} to a bi-bounded isomorphism $\exp| : (\K^\uparrow,+, 0,<, \mathcal{K}|) \simeq (\mathfrak{N}, \cdot, 1, <, \mathcal{N})$ and a bi-bounded order isomorphism $(\mathfrak{N}, \mathcal{N}, <) \simeq (\mathfrak{N}^{\succ 1}, \mathcal{N}|, <)$.
\end{rem}

\begin{rem}
	If we consider the case of a group $\mathfrak{N}$ endowed with the ideal of subgroups $\mathcal{N}_\kappa$ consisting of the subgroups with cardinality strictly less than $\kappa$ for some fixed regular uncountable cardinal $\kappa$, then Proposition~\ref{Omega&Exp} and Remark~\ref{UnrestrictedAnalticExp} tell us that $\K= \R \Lhp \mathfrak{N} \Rhp_{\kappa}$ admits an isomorphism $\Omega: (\K, +,0,<) \simeq (\mathfrak{N}, \cdot, 1, <)$ with its archimedean value group $v(\K)\simeq \mathfrak{N}$ if and only if $\mathfrak{N} \simeq \mathfrak{N}^{\succ 1}$ and $\K$ admits a surjective exponential such that $\exp(\K^\uparrow) = \mathfrak{N}$. This implies Theorem~4.1 of \cite{Berarducci2018b} (see also Theorem~3.4 therein). 
\end{rem}

	


\end{document}